\begin{document}
\title{From Intuitionism to Many-Valued Logics through Kripke Models\thanks{Dedicated to Professor {\sc Mohammad Ardeshir} with high appreciation and admiration.}
}
%
%
\author{{\sc Saeed Salehi}
\orcidID{0000-0001-6961-5749}
}
\authorrunning{{\sc Saeed Salehi}}
%
\institute{Research Institute for Fundamental Sciences, University of Tabriz, 29 Bahman Boulevard, P.O.Box 51666-16471, Tabriz, {\sc Iran}. \\
School of Mathematics,
Institute for Research in Fundamental Sciences,  \\    P.O.Box~19395--5746,
Tehran, Iran.
\\
\email{\sf root@SaeedSalehi.ir \qquad www.SaeedSalehi.ir}
}
\maketitle              
\begin{abstract}
Intuitionistic Propositional Logic  is
proved to be an infinitely many valued logic by Kurt G\"odel~(1932), and it is proved by Stanis{\l}aw Ja\'{s}kowski~(1936) to be a countably 
many valued logic. In this paper, we provide alternative proofs for these theorems by using models of Saul Kripke~(1959).  G\"odel's proof gave rise to an intermediate propositional logic  (between intuitionistic and classical), that is known nowadays as G\"odel or the G\"odel-Dummet Logic, and is  studied by  fuzzy logicians as well. We also provide some results on the inter-definability of propositional connectives in this logic.

\keywords{Intuitionistic Propositional  Logic \and Many Valued Logics    \and  Kripke Models \and  G\"odel-Dummet Logic \and Inter-Definability of Propositional Connectives.}
\end{abstract}
\section{Introduction  and Preliminaries}
Intuitionism grew out of some of the philosophical ideas of its founding father, Luitzen Egbertus Jan Brouwer (see e.g. \cite{brouwer}); what is known nowadays as intuitionistic logic is a formalization given by his student Arend Heyting \cite{heyting}. Kripke models (originating from \cite{kripke}) provided an interesting  mathematical interpretation for this formalization. Let us review some preliminaries about these models:

\begin{definition}[Kripke Frames]\label{def:frame}{\rm

\noindent
A {\em Kripke frame} is a partially ordered set; i.e.,
an ordered pair $\langle K,\succcurlyeq\rangle$ where
$\succcurlyeq\,\subseteq K^{2}$ is a reflexive, transitive and anti-symmetric binary relation on $K$.
}\hfill\ding{71} \end{definition}

%

\begin{definition}[Atoms, Formulas, Languages]\label{def:atfm}{\rm

\noindent
Let ${\tt At}$
be the set of all the propositional atoms; atoms are usually denoted by letters $\mathfrak{p}$ or $\mathfrak{q}$.
Let $\boldsymbol\top$ denote the verum (truth) constant.

\noindent
The language of propositional logics  studied here is $\mathscr{L}=\{\neg,\wedge,\vee,\rightarrow,
\boldsymbol\top\}$.

\noindent
For any $A\subseteq{\tt At}$ and $B\subseteq\mathscr{L}$, the set of all the formulas constructed from $A$ by means of $B$ is denoted by $\mathcal{L}(B,A)$.

\noindent
Let ${\tt Fm}$ denote the set of all the formulas; i.e., 
$\mathcal{L}(\mathscr{L},{\tt At})$.
}\hfill\ding{71} \end{definition}

\begin{definition}[Kripke Models]\label{def:model}{\rm

\noindent
A {\em Kripke model} is a triple $\mathcal{K}=\langle K,\succcurlyeq,\Vdash\rangle$, where $\langle K,\succcurlyeq\rangle$
is a Kripke frame equipped with a persistent binary (satisfaction) relation $\Vdash\,\subseteq K\times{\tt At}$; persistency (of the relation $\Vdash$ with respect to 
$\succcurlyeq$)  means that  for all $k,k'\in K$ and $\mathfrak{p}\in{\tt At}$, if $k'\!\succcurlyeq\!k\Vdash\mathfrak{p}$ then $k'\Vdash\mathfrak{p}$.

The satisfaction
relation  can be extended to all the (propositional) formulas,
i.e., to $\Vdash\;\subseteq K\times{\tt Fm}$, as follows:
\begin{itemize}
\item[$\circ$] $k\Vdash\boldsymbol\top$.
\item[$\circ$] $k\Vdash (\varphi\!\wedge\!\psi) \iff k\Vdash\varphi \textrm{ and }
k\Vdash\psi$.
\item[$\circ$] $k\Vdash (\varphi\!\vee\!\psi) \iff k\Vdash\varphi \textrm{ or }
k\Vdash\psi$.
\item[$\circ$]
    $k\Vdash(\neg\varphi) \iff  \forall k'\!\succcurlyeq\!k (k'\nVdash\varphi)$.
\item[$\circ$]
    $k\Vdash(\varphi\!\rightarrow\!\psi) \iff \forall k'\!\succcurlyeq\!k (k'\Vdash\varphi\Rightarrow  k'\Vdash\psi)$.
    \hfill\ding{71}
\end{itemize}
} \end{definition}

\begin{remark}[\textbf{On Persistency and its Converse}]\label{rem}{\rm

\noindent
It can be shown that the persistency conditions is inherited by the formulas; i.e., for any $k,k'\in K$ in any Kripke model $\mathcal{K}=\langle K,\succcurlyeq,\Vdash\rangle$ and for any formula $\varphi$,
if $k'\!\succcurlyeq\!k\Vdash\varphi$ then $k'\Vdash\varphi$.

\noindent
Obviously, the converse may not hold ($k'\Vdash\psi$ and $k'\!\succcurlyeq\!k$ do not necessarily imply that $k\Vdash\psi$); however, a partial converse holds for negated formulas:

if $k'\!\succcurlyeq\!k$ and $k'\Vdash\neg\varphi$, then $k\nVdash\varphi$.
}\hfill\ding{71}
\end{remark}

By the soundness and  completeness of the intuitionistic propositional logic  (IPL)  with respect to finite Kripke models, the tautologies of IPL are the  formulas (in ${\tt Fm}$) that are satisfied in all the elements of any finite Kripke model. A super-intuitionistic and sub-classical logic is the so-called G\"odel-Dummet logic (see \cite{dummet}), whose tautologies are the formulas that are satisfied in all the elements of all the connected finite Kripke models. A kind  of Kripke model theoretic characterization for this logic is given in \cite{ss-soco}.

\begin{definition}[Connectivity]\label{def:connected}
{\rm

\noindent
 A binary relation $R\subseteq K\times K$ is called {\em connected}, when for any $k,k',k'' \in K$, if $k'\!\succcurlyeq\!k$ and $k''\!\succcurlyeq\!k$, then  we have either $k'\!\succcurlyeq\!k''$  or  $k''\!\succcurlyeq\!k'$ (cf. \cite{svejd}).
}\hfill\ding{71} \end{definition}

The logic IPL is perhaps the most famous non-classical logic. A natural question (that according to Kurt G\"odel \cite{godel} was asked by his supervisor Hans Hahn) was whether IPL is a finitely many valued logic or not.
G\"odel \cite{godel} showed in 1932 that IPL is not finitely many valued. Stanis{\l}aw Ja\'skowski \cite{jaskowski} showed in 1936 that IPL is indeed a countably (infinite) many valued logic. In Section~\ref{sec:values} we give alternative proofs for these theorems by using Kripke models \cite{kripke} which were invented later in 1959.
G\"odel's proof gave birth to  an intermediate logic, that today is called the G\"odel-Dummet logic (GDL). Finally, in Section~\ref{sec:def} we study the problem of inter-definability of propositional connectives in GDL and IPL.

\section{$\omega-$Many Values for Intuitionistic Propositional Logic}\label{sec:values}
Let us begin with a formal definition of a many-valued logic. Throughout the paper, we are dealing with propositional logics only.

\begin{definition}[Many Valued Logics]\label{def:many}{\rm

\noindent
A {\em many valued logic} is $\langle\mathscr{V},\boldsymbol\tau,\backsim\!\!\!\!\!\backsim,\mathrm{\Lambda},\mathrm{V},
\boldsymbol=\!\!\!\boldsymbol>\rangle$, where $\mathscr{V}$ is  a set
of values with a {\em designated} element $\boldsymbol\tau\in\mathscr{V}$ (interpreted as the {\em  truth}) and the  functions
$\backsim\!\!\!\!\!\backsim\colon\mathscr{V}\!\rightarrow\!\mathscr{V}$,
$\mathrm{\Lambda}\colon\mathscr{V}^2\!\rightarrow\!\mathscr{V}$,
$\mathrm{V}\colon\mathscr{V}^2\!\rightarrow\!\mathscr{V}$, and
$\boldsymbol=\!\!\!\boldsymbol>\colon\mathscr{V}^2\!\rightarrow\!\mathscr{V}$ constitute     a  {\em truth table} on $\mathscr{V}$.

\noindent
A {\em valuation function} is any mapping
$\boldsymbol\nu\colon{\rm At}\rightarrow\mathscr{V}$, which can be extended to all the formulas, denoted also by
$\boldsymbol\nu\colon{\rm Fm}\rightarrow\mathscr{V}$, as follows:
\begin{itemize}
\item[$\circ$]
$\boldsymbol\nu(\neg\varphi) \,=\; \;
\backsim\!\!\!\!\!\backsim\boldsymbol\nu(\varphi)$.

\item[$\circ$]
$\boldsymbol\nu(\varphi\!\wedge\!\psi) \,=\; \;
\boldsymbol\nu(\varphi)\;\mathrm{\Lambda}\;
\boldsymbol\nu(\psi)$.

\item[$\circ$]
$\boldsymbol\nu(\varphi\!\vee\!\psi) \,=\; \;
\boldsymbol\nu(\varphi)\;\mathrm{V}\;
\boldsymbol\nu(\psi)$.

\item[$\circ$]
$\boldsymbol\nu(\varphi\!\rightarrow\!\psi) =\,
\boldsymbol\nu(\varphi) \boldsymbol=\!\!\!\boldsymbol>
\boldsymbol\nu(\psi)$.
\end{itemize}
A formula $\theta$ is called {\em tautology}, when it is mapped to the designated value under any valuation function; i.e., $\boldsymbol\nu(\theta)=\boldsymbol\tau$ for any valuation $\boldsymbol\nu$.
}\hfill\ding{71} \end{definition}

Theorem~\ref{th:godel32} appears in  \cite{safari} and \cite{ss-farsi}. In the following,
the disjunction operation ($\vee$) is assumed to be commutative and associative.

\begin{lemma}[A Tautology in $\mathbf{n}$-Valued Logics]\label{lem:n}

\noindent
For any $n>1$, the formula $\bigvee\hspace{-0.75em}\bigvee_{i<j\leqslant n}(\mathfrak{p}_i\!\rightarrow\!\mathfrak{p_j})$ is a tautology in any $n$-valued logic in which the formula $(\mathfrak{p}\!\rightarrow\!\mathfrak{p})\!\vee\!
\mathfrak{q}$ is a tautology.
\end{lemma}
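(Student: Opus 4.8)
The plan is to run a pigeonhole argument on the values assigned to the atoms and then feed the outcome into the hypothesis that $(\mathfrak{p}\!\rightarrow\!\mathfrak{p})\!\vee\!\mathfrak{q}$ is a tautology, using the stipulated commutativity and associativity of the disjunction operation $\mathrm{V}$ to rearrange the big disjunction. So, fix $n>1$ and an arbitrary valuation $\boldsymbol\nu$ into the given $n$-valued logic; the goal is $\boldsymbol\nu\big(\bigvee\hspace{-0.75em}\bigvee_{i<j\leqslant n}(\mathfrak{p}_i\!\rightarrow\!\mathfrak{p}_j)\big)=\boldsymbol\tau$. The formula involves the $n+1$ atoms $\mathfrak{p}_0,\mathfrak{p}_1,\dots,\mathfrak{p}_n$, which $\boldsymbol\nu$ sends into a set $\mathscr{V}$ of only $n$ values; hence, by the pigeonhole principle, $\boldsymbol\nu(\mathfrak{p}_{i_0})=\boldsymbol\nu(\mathfrak{p}_{j_0})$ for some $0\leqslant i_0<j_0\leqslant n$. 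Writing $v$ for this common value, the clause for $\rightarrow$ in Definition~\ref{def:many} gives $\boldsymbol\nu(\mathfrak{p}_{i_0}\!\rightarrow\!\mathfrak{p}_{j_0})=v\,\boldsymbol=\!\!\!\boldsymbol>\,v$.

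Since $\mathrm{V}$ is commutative and associative, the value of the whole disjunction does not depend on the order or the bracketing of its disjuncts, so $\boldsymbol\nu\big(\bigvee\hspace{-0.75em}\bigvee_{i<j\leqslant n}(\mathfrak{p}_i\!\rightarrow\!\mathfrak{p}_j)\big)=(v\,\boldsymbol=\!\!\!\boldsymbol>\,v)\;\mathrm{V}\;w$, where $w\in\mathscr{V}$ is the $\mathrm{V}$-combination of the remaining values $\boldsymbol\nu(\mathfrak{p}_i\!\rightarrow\!\mathfrak{p}_j)$ taken over the pairs $(i,j)\neq(i_0,j_0)$; this collection is non-empty because $\binom{n+1}{2}\geqslant 3$ for $n>1$, so $w$ is genuinely a value. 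Now take any valuation $\boldsymbol\mu$ with $\boldsymbol\mu(\mathfrak{p})=v$ and $\boldsymbol\mu(\mathfrak{q})=w$ (possible since a valuation may map atoms to arbitrary values). Because $(\mathfrak{p}\!\rightarrow\!\mathfrak{p})\!\vee\!\mathfrak{q}$ is a tautology, $\boldsymbol\tau=\boldsymbol\mu\big((\mathfrak{p}\!\rightarrow\!\mathfrak{p})\!\vee\!\mathfrak{q}\big)=(v\,\boldsymbol=\!\!\!\boldsymbol>\,v)\;\mathrm{V}\;w$, which is exactly the value of the big disjunction under $\boldsymbol\nu$ computed above. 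As $\boldsymbol\nu$ was arbitrary, this proves the lemma.

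There is no real obstacle here; the two points to be careful about are that the index range ``$i<j\leqslant n$'' must be understood as covering the $n+1$ atoms $\mathfrak{p}_0,\dots,\mathfrak{p}_n$ (for otherwise the pigeonhole step fails), and that the rearrangement of the disjunction genuinely uses both commutativity and associativity of $\mathrm{V}$ — which is why these were assumed just before the statement. One should also keep track of the fact that the leftover $w$ must be an actual element of $\mathscr{V}$, so that it is the full tautology $(\mathfrak{p}\!\rightarrow\!\mathfrak{p})\!\vee\!\mathfrak{q}$ — and not the unavailable claim that $\mathfrak{p}\!\rightarrow\!\mathfrak{p}$ by itself is always designated — that gets applied.
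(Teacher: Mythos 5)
Your proof is correct and follows essentially the same route as the paper's: pigeonhole on the $n+1$ atoms over $n$ values, then an appeal to the tautology $(\mathfrak{p}\!\rightarrow\!\mathfrak{p})\!\vee\!\mathfrak{q}$ with $\mathfrak{p}$ instantiated at the repeated value and $\mathfrak{q}$ at the $\mathrm{V}$-combination of the remaining disjuncts. You merely spell out the rearrangement via commutativity and associativity of $\mathrm{V}$ and the non-emptiness of the leftover disjuncts, details the paper leaves implicit.
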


\begin{proof}

\noindent

\noindent
In an $n$-valued logic, the $n+1$ atoms $\{\mathfrak{p}_0,\mathfrak{p}_1,\cdots,\mathfrak{p}_n\}$
can take $n$ values. So, under a valuation function,  there should exist some $i<j\leqslant n$ such that $\mathfrak{p}_i$ and $\mathfrak{p}_j$ take the same value, by the Pigeonhole Principle. Since $(\mathfrak{p}\!\rightarrow\!\mathfrak{p})\!\vee\!
\mathfrak{q}$ is a tautology, then the formula $\bigvee\hspace{-0.75em}\bigvee_{i<j\leqslant n}(\mathfrak{p}_i\!\rightarrow\!\mathfrak{p_j})$ should be mapped to the designated value by all the valuation functions.
\hfill \ding{113} \end{proof}

The lemma implies that the formula $(A\!\rightarrow\!B)
\vee(A\!\rightarrow\!C)
\vee(B\!\rightarrow\!C)$ is a tautology in the classical propositional logic; this formula is not a tautology in the intuitionistic (or even G\"odel-Dummet) propositional logic.

\begin{theorem}[G\"odel 1932: IPL Is Not Finitely Many Valued]\label{th:godel32}

\noindent
Intuitionistic propositional logic is not finitely many valued.
\end{theorem}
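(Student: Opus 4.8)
The plan is to argue by contradiction, using Lemma~\ref{lem:n} to produce a formula that would have to be an IPL-tautology but can be refuted in a small finite Kripke model. So suppose IPL \emph{were} finitely many valued, say $n$-valued, via some $n$-valued logic $\langle\mathscr{V},\boldsymbol\tau,\backsim\!\!\!\!\!\backsim,\mathrm{\Lambda},\mathrm{V},\boldsymbol=\!\!\!\boldsymbol>\rangle$ whose tautologies are precisely the IPL-tautologies. If $n=1$ then every formula is a tautology of that logic, while the atom $\mathfrak{p}$ is not an IPL-tautology; so we may assume $n>1$. Since $\mathfrak{p}\!\rightarrow\!\mathfrak{p}$ is an IPL-tautology, so is $(\mathfrak{p}\!\rightarrow\!\mathfrak{p})\!\vee\!\mathfrak{q}$; hence $(\mathfrak{p}\!\rightarrow\!\mathfrak{p})\!\vee\!\mathfrak{q}$ is a tautology of the $n$-valued logic, and Lemma~\ref{lem:n} gives that $\Phi_n := \bigvee\hspace{-0.75em}\bigvee_{i<j\leqslant n}(\mathfrak{p}_i\!\rightarrow\!\mathfrak{p}_j)$ is a tautology of that logic, and therefore an IPL-tautology.

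Next I would exhibit a finite Kripke model refuting $\Phi_n$; by the soundness and completeness of IPL with respect to finite Kripke models (recalled above), this contradicts the previous paragraph. Take the linear $(n{+}1)$-element frame $a_n\!\succcurlyeq\!a_{n-1}\!\succcurlyeq\!\cdots\!\succcurlyeq\!a_0$, i.e. $a_m\!\succcurlyeq\!a_\ell$ iff $m\geqslant\ell$, and define the satisfaction relation on atoms by $a_m\Vdash\mathfrak{p}_i \iff m>i$ (so $\mathfrak{p}_i$ ``switches on'' at the node $a_{i+1}$ and stays on above it, and $\mathfrak{p}_n$ is never forced). This relation is persistent with respect to $\succcurlyeq$, so $\langle\{a_0,\dots,a_n\},\succcurlyeq,\Vdash\rangle$ is a genuine Kripke model.

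Finally I would check that $a_0\nVdash\Phi_n$. It suffices to show $a_0\nVdash\mathfrak{p}_i\!\rightarrow\!\mathfrak{p}_j$ for every pair $0\leqslant i<j\leqslant n$: since $i\leqslant n-1$, the node $a_{i+1}$ exists and satisfies $a_{i+1}\!\succcurlyeq\!a_0$, while $a_{i+1}\Vdash\mathfrak{p}_i$ (because $i{+}1>i$) and $a_{i+1}\nVdash\mathfrak{p}_j$ (because $i{+}1\leqslant j$); hence the implication $\mathfrak{p}_i\!\rightarrow\!\mathfrak{p}_j$ fails already at $a_0$. As no disjunct of $\Phi_n$ is forced at $a_0$, we get $a_0\nVdash\Phi_n$, so $\Phi_n$ is not an IPL-tautology — contradicting the conclusion of the first paragraph. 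Therefore IPL is not finitely many valued. The only genuinely creative step here is the choice of the ``staircase'' valuation $a_m\Vdash\mathfrak{p}_i\iff m>i$ on the chain, which is engineered so that all the implications $\mathfrak{p}_i\!\rightarrow\!\mathfrak{p}_j$ ($i<j$) are simultaneously refuted at the root; the persistency check and the unwinding of the forcing clauses are routine.
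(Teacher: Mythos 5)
Your proposal is correct and follows essentially the same route as the paper: reduce, via Lemma~\ref{lem:n}, to showing that $\bigvee_{i<j\leqslant n}(\mathfrak{p}_i\rightarrow\mathfrak{p}_j)$ is not an IPL-tautology, and then refute it at the root of a finite Kripke model. The only real difference is the choice of counter-model: the paper uses a fan (a root below $n$ pairwise incomparable points $k_0,\dots,k_{n-1}$, with $k_i$ forcing only $\mathfrak{p}_i$), whereas your staircase valuation $a_m\Vdash\mathfrak{p}_i\iff m>i$ on a linear $(n{+}1)$-chain is equally correct and, being connected, actually yields the slightly stronger conclusion that the formula fails already in G\"odel--Dummet logic; your explicit treatment of the $n=1$ case and of the hypothesis of Lemma~\ref{lem:n} (that $(\mathfrak{p}\rightarrow\mathfrak{p})\vee\mathfrak{q}$ be a tautology of the assumed finite-valued logic) spells out a step the paper leaves implicit.
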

\begin{proof}

\noindent

\noindent
By Lemma~\ref{lem:n} it suffices to show that for any $n>1$, the formula $\bigvee\hspace{-0.75em}\bigvee_{i<j\leqslant n}(\mathfrak{p}_i\!\rightarrow\!\mathfrak{p_j})$ is not a tautology in IPL. Consider the Kripke model $\mathcal{K}=\langle K,\succcurlyeq,\Vdash\rangle$ with

$K=\{k,k_0,k_1,\cdots,k_{n-1}\}$,

$\succcurlyeq\,=\{(k_i,k)\mid i\!<\!n\}\cup\{(k_i,k_i)\mid i\!<\!n\}\cup\{(k,k)\}$, and

$\Vdash\,\,=\,\{(k_0,\mathfrak{p}_0),(k_1,\mathfrak{p}_1),
\cdots,(k_{n-1},\mathfrak{p}_{n-1})\}$.
\begin{diagram}
\hspace{3em}\bullet\;k_0[\![\mathfrak{p}_0]\!] &  & \hspace{3em} \bullet\;k_1 [\![\mathfrak{p}_1]\!] & \qquad \cdots & \hspace{3.75em}\bullet\;k_{n-1} [\![\mathfrak{p}_{n-1}]\!] \\
 & \luTo &\uTo & \ruTo &   \\
& & \hspace{1.75em}\bullet\;k[\![]\!] & &
\end{diagram}
\noindent
For any $i<n$ we have $k_i\Vdash\mathfrak{p}_i$,  and also $k_i\nVdash\mathfrak{p}_j$ for any $j>i$. So, $k_i\nVdash\mathfrak{p}_i\!\rightarrow\!\mathfrak{p}_j$ for any $i<j\leqslant n$; which implies that $k\nVdash\bigvee\hspace{-0.75em}\bigvee_{i<j\leqslant n}(\mathfrak{p}_i\!\rightarrow\!\mathfrak{p_j})$.
\hfill \ding{113} \end{proof}

The rest of this section is devoted to proving Ja\'skowski's result (Theorem~\ref{th:jask}) that IPL is a countably infinite many valued logic.

\begin{definition}[Monotone  Functions]\label{def:perf}{\rm

\noindent
 For a Kripke frame $(K,\succcurlyeq)$, a function $f\colon K\rightarrow\{0,1\}$ is called {\em monotone}, when for any $k,k'\in K$, if $k'\!\succcurlyeq\!k$, then $f(k')\!\geqslant\!f(k)$. We indicate the monotonicity of $f$  by writing  $f\colon (K,\succcurlyeq)\rightarrow\{0,1\}$.
}\hfill\ding{71} \end{definition}

\begin{example}[ $\textswab{f}_{\mathcal{K}}^\psi$\! ]
\label{ex:f}
{\rm

\noindent

\noindent
For any Kripke model $\mathcal{K}=(K,\succcurlyeq,\Vdash)$ and any formula $\psi$, the function $$\textswab{f}_{\mathcal{K}}^\psi\colon K\rightarrow\{0,1\}, \qquad \textswab{f}_\mathcal{K}^\psi(k)=\begin{cases} 1 & \text{ if } k\Vdash\psi \\ 0 & \text{ if } k\nVdash\psi\end{cases}$$
is monotone.
}\hfill\ding{71} \end{example}

\begin{definition}[$\backsim\!\!\!\!\!\backsim,  \mathrm{\Lambda}, \mathrm{V}$ and  $\boldsymbol=\!\!\!\boldsymbol>$]\label{def:conn}{\rm

\noindent
For a Kripke frame $(K,\succcurlyeq)$ and monotone  functions $f,g\colon (K,\succcurlyeq)\rightarrow\{0,1\}$, let

\medskip

$\backsim\!\!\!\!\!\backsim\!f\colon K\rightarrow\{0,1\}$ be defined by $(\backsim\!\!\!\!\!\backsim\!f)(k)=\begin{cases}1 & \text{ if } \forall k'\!\succcurlyeq\!k (f(k')\!=\!0) \\ 0 & \text{ if } \exists k'\!\succcurlyeq\!k (f(k')\!=\!1)\end{cases}$,

$f \,\mathrm{\Lambda}\, g\colon K\rightarrow\{0,1\}$ be defined by $(f \,\mathrm{\Lambda}\, g)(k)=\min\{f(k),g(k)\}$,

\medskip

$f \,\mathrm{V}\, g\colon K\rightarrow\{0,1\}$ be defined by $(f \,\mathrm{V}\, g)(k)=\max\{f(k),g(k)\}$,

\medskip

$f\!\boldsymbol=\!\!\!\boldsymbol>\!g\colon K\rightarrow\{0,1\}$ be defined by

\hfill $(f\!\boldsymbol=\!\!\!\boldsymbol>\!g)(k)=\begin{cases}1 & \text{ if } \forall k'\!\succcurlyeq\!k (f(k')\!=\!1\!\Rightarrow\!g(k')\!=\!1) \\ 0 & \text{ if } \exists k'\!\succcurlyeq\!k (f(k')\!=\!1\;\&\; g(k')\!=\!0)\end{cases}$,

\noindent
for all $k\in K$.
}\hfill\ding{71} \end{definition}

\begin{definition}[Constant  Functions]\label{def:cons}{\rm

\noindent
Let $\mathbf{1}_K\colon K\rightarrow\{0,1\}$ be  the constant $1$ function, i.e., $\mathbf{1}_K(k)=1$ for all $k\in K$; and let $\mathbf{0}_K\colon K\rightarrow\{0,1\}$ be the constant $0$ function: $\mathbf{0}_K(k)=0$ for all $k\in K$.
}\hfill\ding{71} \end{definition}

It is easy to see that the functions $\mathbf{1}_K$ and $\mathbf{0}_K$ obey the rules of the classical propositional logic with the operations $\backsim\!\!\!\!\!\backsim,  \mathrm{\Lambda}, \mathrm{V}$ and  $\boldsymbol=\!\!\!\boldsymbol>$. For example, $(\backsim\!\!\!\!\!\backsim\!\mathbf{1}_K)=\mathbf{0}_K$, $(\mathbf{1}_K\,\mathrm{\Lambda}\,\mathbf{1}_K)=\mathbf{1}_K$, $(\mathbf{0}_K\,\mathrm{V}\,\mathbf{1}_K)=\mathbf{1}_K$ and $(\mathbf{1}_K\!\boldsymbol=\!\!\!\boldsymbol>\!\mathbf{0}_K)
=\mathbf{0}_K$.
We omit the proof of the following straightforward observation.

\begin{lemma}[Monotonicity of $\mathbf{1}_K,\mathbf{0}_K,\backsim\!\!\!\!\!\backsim\!f,f \,\mathrm{\Lambda}\, g,f \,\mathrm{V}\, g$ and $f\!\boldsymbol=\!\!\!\boldsymbol>\!g$]\label{lem:per}

\noindent
For any Kripke frame $(K,\succcurlyeq)$, the constant functions $\mathbf{1}_K$ and $\mathbf{0}_K$ are monotone, and if $f,g\colon (K,\succcurlyeq)\rightarrow\{0,1\}$ are monotone, then so are $\backsim\!\!\!\!\!\backsim\!f,f \,\mathrm{\Lambda}\, g,f \,\mathrm{V}\, g$ and $f\!\boldsymbol=\!\!\!\boldsymbol>\!g$.
\hfill \ding{113} \end{lemma}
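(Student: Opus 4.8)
The plan is to verify monotonicity separately for each of the six functions, using throughout the fact that, since all values lie in $\{0,1\}$, a map $h\colon K\to\{0,1\}$ is monotone if and only if $h(k)=1$ implies $h(k')=1$ for every $k'\succcurlyeq k$ (the case $h(k)=0$ being vacuous). For the constant functions this is immediate: $\mathbf{1}_K(k')=1=\mathbf{1}_K(k)$ and $\mathbf{0}_K(k')=0=\mathbf{0}_K(k)$ for all $k'\succcurlyeq k$, so $\mathbf{1}_K$ and $\mathbf{0}_K$ are monotone.

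For $f\,\mathrm{\Lambda}\,g$ and $f\,\mathrm{V}\,g$ I would argue directly from the monotonicity of $f$ and $g$: if $k'\succcurlyeq k$, then $f(k')\geqslant f(k)$ and $g(k')\geqslant g(k)$, and since $\min$ and $\max$ are monotone in each argument, $(f\,\mathrm{\Lambda}\,g)(k')=\min\{f(k'),g(k')\}\geqslant\min\{f(k),g(k)\}=(f\,\mathrm{\Lambda}\,g)(k)$, and likewise for $\mathrm{V}$ with $\max$.

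The cases $\backsim\!\!\!\!\!\backsim\!f$ and $f\!\boldsymbol=\!\!\!\boldsymbol>\!g$ are the ones where the frame structure, rather than the monotonicity of $f$ and $g$, is used; the key ingredient is the transitivity of $\succcurlyeq$ (Definition~\ref{def:frame}). Suppose $k'\succcurlyeq k$. If $(\backsim\!\!\!\!\!\backsim\!f)(k)=1$, then $f(k'')=0$ for all $k''\succcurlyeq k$; given any $k''\succcurlyeq k'$, transitivity gives $k''\succcurlyeq k$, hence $f(k'')=0$, so $(\backsim\!\!\!\!\!\backsim\!f)(k')=1$. Similarly, if $(f\!\boldsymbol=\!\!\!\boldsymbol>\!g)(k)=1$, then for every $k''\succcurlyeq k'$ transitivity again yields $k''\succcurlyeq k$, so $f(k'')=1$ forces $g(k'')=1$, which is exactly the defining condition for $(f\!\boldsymbol=\!\!\!\boldsymbol>\!g)(k')=1$.

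I do not expect any real obstacle here; the statement is a routine case analysis, which is presumably why the authors omit it. The only point worth flagging is that the $\backsim\!\!\!\!\!\backsim$ and $\boldsymbol=\!\!\!\boldsymbol>$ clauses genuinely require transitivity of $\succcurlyeq$: without it, the universally quantified conditions defining these two operations would not propagate upward along the order, and monotonicity could fail.
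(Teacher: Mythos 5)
Your proof is correct and is exactly the routine case analysis that the paper explicitly omits (``We omit the proof of the following straightforward observation''), so there is no divergence of approach to report. Your closing observation is also accurate and worth making: the $\backsim\!\!\!\!\!\backsim$ and $\boldsymbol=\!\!\!\boldsymbol>$ cases rely only on the transitivity of $\succcurlyeq$ from Definition~\ref{def:frame} (not on the monotonicity of $f$ and $g$), whereas the $\mathrm{\Lambda}$ and $\mathrm{V}$ cases use only the monotonicity of $f$ and $g$.
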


Finally, we can provide the following  countably many values for IPL:

\begin{definition}[Countably Many Values for IPL]\label{def:values}{\rm

\noindent
Enumerate all the finite Kripke frames as $(K_0,\succcurlyeq_0),(K_1,\succcurlyeq_1),
(K_2,\succcurlyeq_2),\cdots$, where $K_n\subset\mathbb{N}$ for all $n\in\mathbb{N}$. Let

\noindent $\mathscr{V}=\{\langle f_0,f_1,f_2,\cdots\rangle\mid \forall n [f_n\colon (K_n,\succcurlyeq_n)\!\rightarrow\!\{0,1\}]\;\&\;$

\hfill $\exists N\!\in\!\mathbb{N}[(\forall n\!\geqslant\!N f_n\!=\!\mathbf{1}_{K_n})\text{ or }(\forall n\!\geqslant\!N f_n\!=\!\mathbf{0}_{K_n})]\}.$

\noindent
In the other words, the set of values $\mathscr{V}$ consists of all the sequences $\langle f_0,f_1,f_2,\cdots\rangle$ such that for each $n$, $f_n$ is a monotone function on $(K_n,\succcurlyeq_n)$, and the sequences are ultimately constant (from a step onward,  $f_n$'s are either all $\mathbf{1}_{K_n}$ or all $\mathbf{0}_{K_n}$).

\noindent
Let $\boldsymbol\tau=\langle\mathbf{1}_{K_0},
\mathbf{1}_{K_1},\mathbf{1}_{K_2},\cdots\rangle$ be the designated element (for truth).

\noindent
For $\mathfrak{f}=\langle f_0,f_1,f_2,\cdots\rangle\in\mathscr{V}$ and $\mathfrak{g}=\langle g_0,g_1,g_2,\cdots\rangle\in\mathscr{V}$, let (cf. Definition~\ref{def:conn})

$\backsim\!\!\!\!\!\backsim\mathfrak{f}=\langle \backsim\!\!\!\!\!\backsim\!f_0,\backsim\!\!\!\!\!\backsim\!f_1,\backsim\!\!\!\!\!\backsim\!f_2,\cdots\rangle$,

$\mathfrak{f}\,\mathrm{\Lambda}\,\mathfrak{g}=\langle f_0 \,\mathrm{\Lambda}\, g_0,f_1 \,\mathrm{\Lambda}\, g_1,f_2 \,\mathrm{\Lambda}\, g_2,\cdots\rangle$,

$\mathfrak{f}\,\mathrm{V}\,\mathfrak{g}=\langle f_0 \,\mathrm{V}\, g_0,f_1 \,\mathrm{V}\, g_1,f_2 \,\mathrm{V}\, g_2,\cdots\rangle$, and

$\mathfrak{f}\!\boldsymbol=\!\!\!\boldsymbol>\!\mathfrak{g}=\langle f_0\!\boldsymbol=\!\!\!\boldsymbol>\!g_0,f_1\!\boldsymbol=\!\!\!\boldsymbol>\!g_1,
f_2\!\boldsymbol=\!\!\!\boldsymbol>\!g_2,\cdots\rangle$.
}\hfill\ding{71} \end{definition}

It can be immediately seen that $\mathscr{V}$ is a countable set, and Lemma~\ref{lem:per} implies that $\mathscr{V}$ is closed under the operations $\backsim\!\!\!\!\!\backsim,\mathrm{\Lambda},\mathrm{V}$ and $\boldsymbol=\!\!\!\boldsymbol>$. Before proving the main theorem, we make a further definition and prove  an auxiliary lemma.

\begin{definition}[$\langle\!\langle\alpha\rangle\!\rangle_n$, $\boldsymbol\Vdash_n^{\boldsymbol\nu}$ and $\boldsymbol\nu_m^{\Vdash}$]\label{def:forv}{\rm

\noindent
For a sequence $\alpha$, let $\langle\!\langle\alpha\rangle\!\rangle_n$ denote its $n$-th element (if any), for any $n\in\mathbb{N}$.

\noindent
(1) Let a valuation $\boldsymbol\nu\colon{\tt At}\rightarrow\mathscr{V}$ be given. The satisfaction relation $\boldsymbol\Vdash_n^{\boldsymbol\nu}$ is defined on any finite Kripke frame $(K_n,\succcurlyeq_n)$, with $K_n\subset\mathbb{N}$ (see Definition~\ref{def:values}), by the following for any atom $\mathfrak{p}\in{\tt At}$ and any $k\in K_n$: \;  $k\boldsymbol\Vdash_n^{\boldsymbol\nu}\mathfrak{p} \iff \langle\!\langle\boldsymbol\nu(\mathfrak{p})
\rangle\!\rangle_n(k)=1$.

\noindent
(2) Let a Kripke model $\mathcal{K}=(K_m,\succcurlyeq_m,\Vdash)$ on the Kripke frame $(K_m,\succcurlyeq_m)$  be given (see  Definition~\ref{def:values}). Define the valuation $\boldsymbol\nu_m^{\Vdash}$ by  $$\boldsymbol\nu_m^{\Vdash}(\mathfrak{p})=
\langle\mathbf{1}_{K_0},\cdots,
\mathbf{1}_{K_{m-1}},
\textswab{f}_{\mathcal{K}}^{\mathfrak{p}},
\mathbf{1}_{K_{m+1}},\cdots\rangle$$
for any  $\mathfrak{p}\in{\tt At}$, where $\textswab{f}_{\mathcal{K}}^{\mathfrak{p}}\colon K_m\rightarrow\{0,1\}$ is the function that was defined   in Example~\ref{ex:f}:
$\textswab{f}_\mathcal{K}^{\mathfrak{p}}(k)=1$ if $k\Vdash\mathfrak{p}$, and $\textswab{f}_\mathcal{K}^{\mathfrak{p}}(k)=0$ if $k\nVdash\mathfrak{p}$, for any $k\in K_m$.
}\hfill\ding{71} \end{definition}

It is clear that the relation $\boldsymbol\Vdash_n^{\boldsymbol\nu}\;\subseteq K_n\times{\tt At}$ is  persistent.

\begin{lemma}[On  $\boldsymbol\Vdash_n^{\boldsymbol\nu}$ and $\boldsymbol\nu_m^{\Vdash}$]\label{lem:aux}

\noindent
{\em (1)} Let a valuation $\boldsymbol\nu\colon{\tt At}\rightarrow\mathscr{V}$ be given, and the satisfaction  relation $\Vdash_n^{\boldsymbol\nu}$ be defined on  $(K_n,\succcurlyeq_n)$ as in Definition~\ref{def:forv}. Then for any formula $\varphi\in{\tt Fm}$
 and any $k\in K_n$, we have   $k\boldsymbol\Vdash_n^{\boldsymbol\nu}\varphi \iff \langle\!\langle\boldsymbol\nu(\varphi)
\rangle\!\rangle_n(k)=1$.

\noindent
{\em (2)}
 Let a Kripke model $\mathcal{K}=(K_m,\succcurlyeq_m,\Vdash)$ be given on the  frame $(K_m,\succcurlyeq_m)$, and  the valuation $\boldsymbol\nu_m^{\Vdash}$
 be defined as in Definition~\ref{def:forv}. Then for any formula $\varphi\in{\tt Fm}$ and any $k\in K_m$, we have $k\nVdash\varphi
  \iff \langle\!\langle\boldsymbol\nu_m^{\Vdash}(\varphi)
 \rangle\!\rangle_m(k)=0$.
 \end{lemma}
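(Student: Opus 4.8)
The plan is to prove both parts by induction on the structure of the formula $\varphi$, since the value functions $\backsim\!\!\!\!\!\backsim,\mathrm{\Lambda},\mathrm{V},\boldsymbol=\!\!\!\boldsymbol>$ in Definition~\ref{def:conn} were deliberately designed to mimic the Kripke satisfaction clauses of Definition~\ref{def:model} componentwise. For part~(1), the base case $\varphi=\mathfrak{p}$ is exactly the defining clause of $\boldsymbol\Vdash_n^{\boldsymbol\nu}$ in Definition~\ref{def:forv}(1), and $\varphi=\boldsymbol\top$ follows because $\langle\!\langle\boldsymbol\nu(\boldsymbol\top)\rangle\!\rangle_n=\mathbf{1}_{K_n}$ (one should note that a valuation sends $\boldsymbol\top$ to $\boldsymbol\tau$, hence its $n$-th component is $\mathbf{1}_{K_n}$). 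For the inductive step I would treat each connective in turn: for $\wedge$ and $\vee$ the claim reduces to the facts $(f\,\mathrm{\Lambda}\,g)(k)=1\iff f(k)=g(k)=1$ and $(f\,\mathrm{V}\,g)(k)=1\iff f(k)=1$ or $g(k)=1$, matched against the conjunction/disjunction clauses; for $\neg$ and $\rightarrow$ one unwinds the definitions of $\backsim\!\!\!\!\!\backsim\!f$ and $f\!\boldsymbol=\!\!\!\boldsymbol>\!g$ from Definition~\ref{def:conn}, which quantify over all $k'\succcurlyeq_n k$ precisely as the Kripke clauses do, and applies the induction hypothesis at each such $k'$. A small subtlety worth spelling out: since $\langle\!\langle\boldsymbol\nu(\varphi)\rangle\!\rangle_n$ is a $\{0,1\}$-valued function, "$\neq 1$" is the same as "$=0$", so the two cases in the definitions of $\backsim\!\!\!\!\!\backsim$ and $\boldsymbol=\!\!\!\boldsymbol>$ are genuinely exhaustive and the equivalence is clean.

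For part~(2), I would again induct on $\varphi$. It is convenient to prove simultaneously (or to extract as an immediate corollary of the contrapositive) the positive form $k\Vdash\varphi\iff\langle\!\langle\boldsymbol\nu_m^{\Vdash}(\varphi)\rangle\!\rangle_m(k)=1$, which is the real workhorse; the stated version is then just its negation, using once more that the $m$-th component is $\{0,1\}$-valued. The base case $\varphi=\mathfrak{p}$ is exactly the definition of $\textswab{f}_{\mathcal{K}}^{\mathfrak{p}}$ in Example~\ref{ex:f} and Definition~\ref{def:forv}(2), namely $\langle\!\langle\boldsymbol\nu_m^{\Vdash}(\mathfrak{p})\rangle\!\rangle_m=\textswab{f}_{\mathcal{K}}^{\mathfrak{p}}$ and $\textswab{f}_{\mathcal{K}}^{\mathfrak{p}}(k)=1\iff k\Vdash\mathfrak{p}$; and $\varphi=\boldsymbol\top$ holds since both sides are always true. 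The inductive cases are structurally identical to those of part~(1): one checks that the operation $\backsim\!\!\!\!\!\backsim,\mathrm{\Lambda},\mathrm{V},\boldsymbol=\!\!\!\boldsymbol>$ applied to the $m$-th components computes exactly the truth value prescribed by the corresponding clause of Definition~\ref{def:model}, then applies the induction hypothesis. One must be a little careful that in part~(2) the valuation $\boldsymbol\nu_m^{\Vdash}$ has a nonstandard component only at index $m$; but since the whole statement of part~(2) only concerns the $m$-th component $\langle\!\langle\,\cdot\,\rangle\!\rangle_m$ and the operations act componentwise, the other indices never interfere.

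The main obstacle — really the only place where any thought is needed rather than bookkeeping — is the $\rightarrow$ (and similarly $\neg$) case, because there the induction hypothesis must be invoked not at $k$ itself but at every $k'$ with $k'\succcurlyeq_n k$ (respectively $k'\succcurlyeq_m k$), and one must verify that the universal/existential alternatives in the two-line definitions of $f\!\boldsymbol=\!\!\!\boldsymbol>\!g$ and $\backsim\!\!\!\!\!\backsim\!f$ line up exactly with "$\forall k'\succcurlyeq k$" and "$\exists k'\succcurlyeq k$" in the Kripke clauses. This is where persistency (Remark~\ref{rem}) and the monotonicity of the component functions (Lemma~\ref{lem:per}) implicitly do their work in guaranteeing the two displayed cases are exhaustive, though for the bare equivalence stated here the $\{0,1\}$-valuedness already suffices. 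Everything else is a routine matching of definitions, so I would present the $\rightarrow$ case in full and abbreviate the rest.
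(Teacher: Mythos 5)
Your proposal is correct and follows exactly the route the paper takes: a structural induction on $\varphi$, with the base cases read off from Definition~\ref{def:forv} and the inductive steps reduced to the componentwise match between the operations of Definition~\ref{def:conn} and the Kripke clauses of Definition~\ref{def:model} (the paper states this in one line; you merely supply the bookkeeping, correctly flagging the $\neg$ and $\rightarrow$ cases where the induction hypothesis is applied at all $k'\succcurlyeq k$). No gaps.
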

\begin{proof}

\noindent

\noindent
Both assertions can be proved by induction on $\varphi$. They are clear for $\varphi=\boldsymbol\top$ and hold for atomic $\varphi\in{\tt At}$ by Definition~\ref{def:forv}. The inductive cases follow immediately from
Definitions~\ref{def:model}, \ref{def:many}, \ref{def:conn}, and~\ref{def:values}.
\hfill \ding{113} \end{proof}

\begin{theorem}[Ja\'skowski 1936: IPL Is Countably  Many Valued]\label{th:jask}

\noindent
Intuitionistic propositional logic is countably infinite  many valued.
\end{theorem}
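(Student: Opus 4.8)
The plan is to confirm that the structure $\langle\mathscr{V},\boldsymbol\tau,\backsim\!\!\!\!\!\backsim,\mathrm{\Lambda},\mathrm{V},\boldsymbol=\!\!\!\boldsymbol>\rangle$ of Definition~\ref{def:values} is a bona fide many valued logic, and then to show that its tautologies are precisely the tautologies of IPL; combining the latter with G\"odel's Theorem~\ref{th:godel32} then yields that IPL is many valued with countably infinitely many values and with no finite number of values. That $\mathscr{V}$ is countable and closed under $\backsim\!\!\!\!\!\backsim,\mathrm{\Lambda},\mathrm{V}$ and $\boldsymbol=\!\!\!\boldsymbol>$ has already been noted (the closure by Lemma~\ref{lem:per}); $\mathscr{V}$ is moreover infinite, since infinitely many of the enumerated frames $(K_n,\succcurlyeq_n)$ carry more than one monotone function into $\{0,1\}$. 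Hence $\langle\mathscr{V},\boldsymbol\tau,\backsim\!\!\!\!\!\backsim,\mathrm{\Lambda},\mathrm{V},\boldsymbol=\!\!\!\boldsymbol>\rangle$ is a many valued logic in the sense of Definition~\ref{def:many}, with countably infinitely many values.

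First I would prove that every tautology of IPL is a tautology of this logic. Let $\theta$ be an IPL-tautology and let an arbitrary valuation $\boldsymbol\nu\colon{\tt At}\rightarrow\mathscr{V}$ be given; I must check that $\langle\!\langle\boldsymbol\nu(\theta)\rangle\!\rangle_n=\mathbf{1}_{K_n}$ for every $n$, i.e. that $\langle\!\langle\boldsymbol\nu(\theta)\rangle\!\rangle_n(k)=1$ for all $n$ and all $k\in K_n$. Fixing $n$ and $k\in K_n$, I form the finite Kripke model $(K_n,\succcurlyeq_n,\boldsymbol\Vdash_n^{\boldsymbol\nu})$ of Definition~\ref{def:forv} (its satisfaction relation being persistent, as noted there); since $\theta$ holds at every node of every finite Kripke model, $k\boldsymbol\Vdash_n^{\boldsymbol\nu}\theta$, and Lemma~\ref{lem:aux}(1) turns this into $\langle\!\langle\boldsymbol\nu(\theta)\rangle\!\rangle_n(k)=1$. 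As $n$ and $k$ were arbitrary, $\boldsymbol\nu(\theta)=\boldsymbol\tau$.

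Next I would prove the converse: if $\theta$ is not a tautology of IPL, then some valuation sends it off $\boldsymbol\tau$. By the completeness of IPL with respect to finite Kripke models there is a finite Kripke model in which $\theta$ fails at some node; since Kripke satisfaction is invariant under isomorphism of models and every finite poset is isomorphic to one with carrier a finite subset of $\mathbb{N}$, I may take this model to be $\mathcal{K}=(K_m,\succcurlyeq_m,\Vdash)$ on one of the enumerated frames, with $k\nVdash\theta$ for some $k\in K_m$. Applying Lemma~\ref{lem:aux}(2) to the valuation $\boldsymbol\nu_m^{\Vdash}$ of Definition~\ref{def:forv} then gives $\langle\!\langle\boldsymbol\nu_m^{\Vdash}(\theta)\rangle\!\rangle_m(k)=0$, so the $m$-th coordinate of $\boldsymbol\nu_m^{\Vdash}(\theta)$ is not $\mathbf{1}_{K_m}$, and therefore $\boldsymbol\nu_m^{\Vdash}(\theta)\neq\boldsymbol\tau$; thus $\theta$ is not a tautology of $\langle\mathscr{V},\boldsymbol\tau,\backsim\!\!\!\!\!\backsim,\mathrm{\Lambda},\mathrm{V},\boldsymbol=\!\!\!\boldsymbol>\rangle$. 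Together with the previous paragraph this shows that the two classes of tautologies coincide, so IPL is countably many valued; since by Theorem~\ref{th:godel32} it is not $n$-valued for any finite $n$, it is countably \emph{infinite} many valued.

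The real content lies entirely in the induction of Lemma~\ref{lem:aux}, which matches the coordinates of the values in $\mathscr{V}$ against Kripke forcing; the rest is assembly. I expect the only point needing any care to be the transfer step in the converse direction — arguing that the finite countermodel supplied by completeness may be assumed, without loss of generality, to live on one of the frames $(K_n,\succcurlyeq_n)$ fixed by the enumeration — together with the routine verifications that $\mathscr{V}$ is genuinely infinite and that $\langle\mathscr{V},\boldsymbol\tau,\backsim\!\!\!\!\!\backsim,\mathrm{\Lambda},\mathrm{V},\boldsymbol=\!\!\!\boldsymbol>\rangle$ satisfies all the clauses of Definition~\ref{def:many}.
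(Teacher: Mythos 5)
Your proposal is correct and follows essentially the same route as the paper: both directions rest on Lemma~\ref{lem:aux}, using the induced model $(K_n,\succcurlyeq_n,\boldsymbol\Vdash_n^{\boldsymbol\nu})$ for one implication and the induced valuation $\boldsymbol\nu_m^{\Vdash}$ for the other. You merely spell out a few assembly steps the paper leaves implicit (the appeal to completeness, the transfer of the countermodel onto an enumerated frame, and the verification that $\mathscr{V}$ is infinite), all of which are sound.
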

\begin{proof}

\noindent

\noindent
We show that a  formula  $\varphi\in{\tt Fm}$ is satisfied in all the elements of all the finite Kripke models if and only if it is mapped to the designated element under all the valuation functions:

\noindent
(1) If $\varphi$ is satisfied in any element of any finite  Kripke model, then for any valuation  $\boldsymbol\nu$ by Lemma~\ref{lem:aux}(1)  we have $\langle\!\langle\boldsymbol\nu(\varphi)
\rangle\!\rangle_n=\mathbf{1}_{K_n}$ for any $n\in\mathbb{N}$, so $\boldsymbol\nu(\varphi)=\boldsymbol\tau$.

\noindent
(2) If $\varphi$ is not satisfied in some element of some finite Kripke model, then for some $m\in\mathbb{N}$ there is a Kripke model $\mathcal{K}=(K_m,\succcurlyeq_m,\Vdash)$ such that $\Bbbk\nVdash\varphi$ for some $\Bbbk\in K_m$. So, by Lemma~\ref{lem:aux}(2) we have
$\langle\!\langle\boldsymbol\nu_m^{\Vdash}(\varphi)
 \rangle\!\rangle_m(\Bbbk)=0$,
thus $\boldsymbol\nu_m^{\Vdash}(\varphi)\neq
\boldsymbol\tau$.
\hfill \ding{113} \end{proof}

\section{Propositional Connectives inside    G\"odel-Dummet Logic}\label{sec:def}
 In   classical propositional logic (which is a two valued logic), all the connectives can be defined by (the so-called complete set of connectives) $\{\neg,\wedge\}, \{\neg,\vee\}$ or $\{\neg,\rightarrow\}$ only. In this last section  we will see that no propositional connective is definable from the others in IPL, and in GDL only the disjunction operation ($\vee$) can be defined by the conjunction ($\wedge$) and implication ($\rightarrow$) operations. Most of these facts are already known (they appear in e.g. \cite{ss-farsi} and \cite{svejd}).
Theorem~\ref{th:wedge} is from \cite{svejd} with a slightly  different proof; Theorem~\ref{th:impl} is from \cite{svejd} with the same proof.
All of our proofs  are Kripke model theoretic, as usual.

\begin{theorem}[$\boldsymbol\wedge$ Is Not Definable From the Others in GDL]\label{th:wedge}

\noindent
In G\"odel-Dummet Logic, the conjunction connective $(\wedge)$ is not definable from the other propositional connectives.
\end{theorem}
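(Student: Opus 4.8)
The plan is to show that no formula built only from $\{\neg,\vee,\rightarrow,\boldsymbol\top\}$ can be equivalent (over GDL) to $\mathfrak{p}\wedge\mathfrak{q}$. The standard strategy is to exhibit a single connected finite Kripke model together with two nodes that are indistinguishable by every $\wedge$-free formula but that disagree on $\mathfrak{p}\wedge\mathfrak{q}$. Concretely, I would look for a model $\mathcal{K}=\langle K,\succcurlyeq,\Vdash\rangle$ (connected, so that it is a GDL model) with a node $k$ at which $\mathfrak{p}\wedge\mathfrak{q}$ fails, say because $k\nVdash\mathfrak{p}$ while $k\Vdash\mathfrak{q}$, yet such that $k$ satisfies exactly the same $\wedge$-free formulas as some node $k'$ where $\mathfrak{p}\wedge\mathfrak{q}$ holds — or, more symmetrically, build two copies of a frame that are forced to agree on all $\vee,\rightarrow,\neg,\boldsymbol\top$ formulas by an auxiliary lemma, and then note they split on a conjunction.

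First I would isolate the key auxiliary claim: in GDL (equivalently, in any connected finite Kripke model), the truth value of any $\wedge$-free formula at a node is determined by some coarser invariant than the full satisfaction set. The natural candidate, following \v{S}vejdar's approach, is that over a \emph{linear} frame (a chain) the set of $\wedge$-free formulas true at a node can be computed in a way that lets two carefully chosen configurations coincide. Since GDL is complete with respect to finite \emph{linearly ordered} Kripke frames (connectivity forces linearity once we pass to the relevant generated subframes), I would phrase everything over chains $C = c_0 \prec c_1 \prec \cdots \prec c_\ell$ with a fixed atomic valuation, and prove by induction on the $\wedge$-free formula $\chi$ that $c_i \Vdash \chi$ depends only on $i$ and the valuation in a monotone way — then the concrete counterexample is a chain on which $\mathfrak{p}$ and $\mathfrak{q}$ are switched on at different levels so that at the bottom node $\mathfrak{p}\wedge\mathfrak{q}$ fails, whereas no $\wedge$-free formula can detect the precise level at which the ``missing'' atom switches on.

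The cleanest concrete instantiation I would try: take the two-node chain $c_1 \succcurlyeq c_0$ with $c_0 \Vdash \mathfrak{q}$, $c_0 \nVdash \mathfrak{p}$, $c_1 \Vdash \mathfrak{p}$, $c_1 \Vdash \mathfrak{q}$; here $c_0 \nVdash \mathfrak{p}\wedge\mathfrak{q}$. Then put this side by side with the single-node model $\{d\}$ where $d\Vdash\mathfrak{q}$, $d\nVdash\mathfrak{p}$; one checks (again by induction on $\wedge$-free $\chi$, using that $\neg$ and $\rightarrow$ at $c_0$ only ever need to ``look up'' to $c_1$ where every atom relevant here is true) that $c_0$ and $d$ satisfy the same $\wedge$-free formulas. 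If some $\wedge$-free $\delta$ were GDL-equivalent to $\mathfrak{p}\wedge\mathfrak{q}$, then from $c_1\Vdash\mathfrak{p}\wedge\mathfrak{q}$ and persistence one would extract a contradiction with $c_0 \nVdash \delta$ versus whatever $d$ forces. I would verify that this model is connected (trivially, a chain) so it is a legitimate GDL counterexample.

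The main obstacle is the inductive lemma that identifies the right invariant: one must get the $\rightarrow$ and $\neg$ clauses exactly right, since those are the connectives whose evaluation at a node reaches into the future of the frame, and it is precisely there that a naive ``same atoms true'' invariant could break. The delicate point is ensuring that when evaluating $\varphi \rightarrow \psi$ or $\neg\varphi$ at the bottom node, the upper nodes of the two compared configurations are themselves indistinguishable by $\wedge$-free formulas, which requires the induction hypothesis to be stated uniformly over all nodes of both frames simultaneously (a bisimulation-style relation restricted to the $\wedge$-free fragment). Once that relation is set up correctly, the rest — monotonicity, the base case $\boldsymbol\top$ and atoms, and the final ``split on $\wedge$'' step — is routine.
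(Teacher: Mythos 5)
Your headline strategy has a flaw that no choice of models can repair: you propose to find two nodes that satisfy exactly the same $\wedge$-free formulas yet disagree on $\mathfrak{p}\wedge\mathfrak{q}$. No such pair can exist, because the atoms $\mathfrak{p}$ and $\mathfrak{q}$ are themselves $\wedge$-free: two nodes with the same $\wedge$-free theory agree on $\mathfrak{p}$ and on $\mathfrak{q}$, hence automatically on $\mathfrak{p}\wedge\mathfrak{q}$. Your concrete instantiation also fails on its own terms. First, $c_0$ and $d$ are separated by the $\wedge$-free formula $\neg\mathfrak{p}$, which is true at the isolated point $d$ (no successor forces $\mathfrak{p}$) but false at $c_0$ (since $c_1\!\succcurlyeq\!c_0$ and $c_1\Vdash\mathfrak{p}$); so the claimed indistinguishability breaks at the very first non-atomic step. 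Second, both $c_0$ and $d$ refute $\mathfrak{p}\wedge\mathfrak{q}$, so even perfect agreement between them would produce no contradiction; the appeal to ``$c_1\Vdash\mathfrak{p}\wedge\mathfrak{q}$ and persistence'' gives nothing, as persistency only propagates truth upward. Third, the retreat to linear frames discards exactly the leverage you need: on a chain the sets of nodes forcing $\mathfrak{p}$ and forcing $\mathfrak{q}$ are up-sets of a linear order and hence comparable, so $\mathfrak{p}\wedge\mathfrak{q}$ has the same extension as one of the two atoms; consequently no single linear model with a single valuation can refute all $\wedge$-free candidates simultaneously.

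What is needed is not an indistinguishability relation between two nodes but a preservation property of a \emph{set} of nodes that every $\wedge$-free formula enjoys and $\mathfrak{p}\wedge\mathfrak{q}$ violates. The paper takes the connected but non-linear frame with two minimal points $a,c$ below a common top $b$, forcing $\mathfrak{p}$ at $a,b$ and $\mathfrak{q}$ at $b,c$, and proves by induction that every $\theta\in\mathcal{L}(\neg,\vee,\rightarrow,\boldsymbol\top,\mathfrak{p},\mathfrak{q})$ satisfies: if $b\Vdash\theta$ then $a\Vdash\theta$ or $c\Vdash\theta$ (the only delicate case is $\theta=\varphi\!\rightarrow\!\psi$, handled via persistency and the induction hypothesis for $\psi$). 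Since $b\Vdash\mathfrak{p}\wedge\mathfrak{q}$ while $a\nVdash\mathfrak{p}\wedge\mathfrak{q}$ and $c\nVdash\mathfrak{p}\wedge\mathfrak{q}$, the conjunction cannot lie in that fragment. This frame is connected in the sense of Definition~\ref{def:connected} (the successors of each point form a chain), so it is a legitimate GDL countermodel; global linearity is neither required nor, as explained above, usable here.
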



\begin{proof}

\noindent

\noindent
Consider the Kripke model $\mathcal{K}=\langle K,\succcurlyeq,\Vdash\rangle$ where
$K=\{a,b,c\}$,
$\succcurlyeq$ is the reflexive closure of $\{(a,b),(c,b)\}$, and
$\Vdash\;=\{(a,\mathfrak{p}),
(b,\mathfrak{p}),(b,\mathfrak{q}),(c,\mathfrak{q})\}$, for atoms $\mathfrak{p},\mathfrak{q}\in{\tt At}$.
\begin{diagram}
& & \hspace{3em}\bullet\; b[\![\mathfrak{p},\!\mathfrak{q}]\!] & & \\
  & \ruTo &&\luTo& \\
\hspace{3em}\bullet\;a[\![\mathfrak{p}]\!] & & & & \hspace{0.75em}\bullet\; c[\![\mathfrak{q}]\!]
\end{diagram}
\noindent
We show that for all formulas  $\theta\in\mathcal{L}(\neg,\vee,\rightarrow,
\boldsymbol\top,
\mathfrak{p},\mathfrak{q})$ we have:

$(\ast)\qquad
b\Vdash\theta\Longrightarrow a\Vdash\theta\textrm{ or }c\Vdash\theta$.

\noindent
This will prove the desired conclusion, since  $b\Vdash\mathfrak{p}\!\wedge\!\mathfrak{q}$ but $a,c\nVdash\mathfrak{p}\!\wedge\!\mathfrak{q}$, and so  
$\mathfrak{p}\!\wedge\!\mathfrak{q}$ cannot belong to $\mathcal{L}(\neg,\vee,\rightarrow,
\boldsymbol\top,
\mathfrak{p},\mathfrak{q})$. We prove $(\ast)$ by induction on $\theta$. The cases of $\theta=\boldsymbol\top,
\mathfrak{p},\mathfrak{q}$ are trivial, and the induction step of $\neg\varphi$ follows from Remark~\ref{rem}, and the case of $\varphi\vee\psi$ is rather easy. So, only the non-trivial case of $\theta=\varphi\!\rightarrow\!\psi$ remains. Suppose that $(\ast)$ holds for $\varphi$ and $\psi$, and assume (for the sake of a contradiction) that $b\Vdash\varphi\!\rightarrow\!\psi$ but $a,c\nVdash\varphi\!\rightarrow\!\psi$.
So, $a\Vdash\varphi$ and $a\nVdash\psi$; and also $c\Vdash\varphi$ and $c\nVdash\psi$. Whence, by persistency, we should have also   $b\Vdash\varphi$, thus $b\Vdash\psi$. So, by the induction hypothesis $(\ast \textrm{ for } \theta\!=\!\psi)$ we should have either $a\Vdash\psi$ or $c\Vdash\psi$; a contradiction.
\hfill \ding{113} \end{proof}

\begin{theorem}[$\boldsymbol\rightarrow$ Is Not Definable From the Others in GDL]\label{th:impl}

\noindent
In G\"odel-Dummet Logic, the implication connective $(\rightarrow)$ is not definable from the other propositional connectives.
\end{theorem}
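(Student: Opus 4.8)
The plan is to mimic the proof of Theorem~\ref{th:wedge}: I would exhibit a single connected finite Kripke model together with two of its worlds that force exactly the same $\rightarrow$-free formulas, yet are separated by an implication. Since GDL is (sound and) complete for connected finite Kripke models, this shows that $\mathfrak p\!\rightarrow\!\mathfrak q$ is not equivalent in GDL to any formula of $\mathcal{L}(\neg,\wedge,\vee,\boldsymbol\top,\mathfrak p,\mathfrak q)$, i.e.\ that $\rightarrow$ is not definable from $\neg,\wedge,\vee$.

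Concretely, I would take $\mathcal{K}=\langle K,\succcurlyeq,\Vdash\rangle$ with $K=\{a,b,c,d\}$, where $\succcurlyeq$ is the reflexive and transitive closure of $\{(b,a),(d,b),(d,c)\}$ (so that $d\succcurlyeq b\succcurlyeq a$ and $d\succcurlyeq c$, while $c$ is incomparable with both $a$ and $b$), and $\Vdash\,=\{(b,\mathfrak p),(d,\mathfrak p),(d,\mathfrak q)\}$, for atoms $\mathfrak p,\mathfrak q\in{\tt At}$.
\begin{diagram}
 & & \hspace{2em}\bullet\;d[\![\mathfrak{p},\!\mathfrak{q}]\!] & & \\
 & \ruTo & & \luTo & \\
\hspace{2em}\bullet\;b[\![\mathfrak{p}]\!] & & & & \hspace{0.5em}\bullet\;c[\![]\!] \\
\uTo & & & & \\
\hspace{2em}\bullet\;a[\![]\!] & & & &
\end{diagram}
\noindent
The cone $\{k'\mid k'\!\succcurlyeq\!k\}$ of every $k\in K$ is linearly ordered, so $\mathcal{K}$ is a connected Kripke model.

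The heart of the argument is the claim

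$(\ast)\qquad a\Vdash\theta \;\Longleftrightarrow\; c\Vdash\theta$, \quad for every $\theta\in\mathcal{L}(\neg,\wedge,\vee,\boldsymbol\top,\mathfrak p,\mathfrak q)$.

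\noindent
Granting $(\ast)$, the theorem follows: $c\Vdash\mathfrak p\!\rightarrow\!\mathfrak q$, since the only element of the cone $\{c,d\}$ forcing $\mathfrak p$ is $d$, and $d\Vdash\mathfrak q$; but $a\nVdash\mathfrak p\!\rightarrow\!\mathfrak q$, since $b\succcurlyeq a$, $b\Vdash\mathfrak p$ and $b\nVdash\mathfrak q$. Hence $\mathfrak p\!\rightarrow\!\mathfrak q$ takes different truth values at $a$ and at $c$; if it were GDL-equivalent to some $\delta\in\mathcal{L}(\neg,\wedge,\vee,\boldsymbol\top,\mathfrak p,\mathfrak q)$, then $\delta\!\leftrightarrow\!(\mathfrak p\!\rightarrow\!\mathfrak q)$ would be a GDL-tautology, hence forced at every element of the connected model $\mathcal{K}$, forcing $a\Vdash\delta\Leftrightarrow a\Vdash\mathfrak p\!\rightarrow\!\mathfrak q$ and $c\Vdash\delta\Leftrightarrow c\Vdash\mathfrak p\!\rightarrow\!\mathfrak q$, so that $a\nVdash\delta$ and $c\Vdash\delta$, contradicting $(\ast)$ for $\theta=\delta$. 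I would then prove $(\ast)$ by induction on $\theta$. The cases $\theta=\boldsymbol\top,\mathfrak p,\mathfrak q$ are immediate ($a\Vdash\boldsymbol\top\nVdash\mathfrak p$, etc., with $a\nVdash\mathfrak p$, $a\nVdash\mathfrak q$, $c\nVdash\mathfrak p$, $c\nVdash\mathfrak q$), and $\wedge,\vee$ follow at once from the induction hypothesis because forcing of conjunctions and disjunctions is computed pointwise. The case $\theta=\neg\varphi$ is the only delicate one, and it is settled by the shape of the model rather than the induction hypothesis: the cone of $a$ is the chain $d\succcurlyeq b\succcurlyeq a$, so by persistency a formula is forced somewhere in it iff it is forced at the top $d$; hence $a\Vdash\neg\varphi\iff\forall k\!\succcurlyeq\!a\,(k\nVdash\varphi)\iff d\nVdash\varphi$. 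Likewise the cone of $c$ is the chain $d\succcurlyeq c$, so $c\Vdash\neg\varphi\iff d\nVdash\varphi$. Therefore $a\Vdash\neg\varphi\iff c\Vdash\neg\varphi$, and the induction closes.

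The only real choice here --- and the main obstacle --- is designing the model. One needs $a$ and $c$ to force the same atoms (so that the atomic base case of $(\ast)$ holds) and to sit over cones with the same ``negative behaviour'', which is arranged by giving the two cones a common top $d$; by the observation above, this makes every negation $\neg\varphi$ evaluate to ``$d\nVdash\varphi$'' at both $a$ and $c$, hence invisible to the $a/c$ comparison. At the same time the atoms $\mathfrak p$ and $\mathfrak q$ must become forced \emph{in a different order} along the two cones ($\mathfrak p$ before $\mathfrak q$ over $a$, but simultaneously over $c$), so that the ``order-sensitive'' connective $\rightarrow$ can still tell $a$ and $c$ apart. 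Once the model is fixed, everything else is a routine verification; no subtle induction or combinatorial estimate is required.
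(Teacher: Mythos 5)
Your proof is correct, but it takes a recognizably different route from the paper's. The paper reuses the three-world fork of Theorem~\ref{th:wedge} — a top world $b$ forcing $\mathfrak{p},\mathfrak{q}$ above the two incomparable worlds $a$ (forcing $\mathfrak{p}$) and $c$ (forcing nothing) — and proves by induction a one-directional transfer property: for every $\rightarrow$-free $\theta$, if $b,c\Vdash\theta$ then $a\Vdash\theta$; since $b,c\Vdash\mathfrak{p}\!\rightarrow\!\mathfrak{q}$ while $a\nVdash\mathfrak{p}\!\rightarrow\!\mathfrak{q}$, the implication is not expressible. You instead build a four-world connected model in which two worlds $a$ and $c$ force exactly the same $\rightarrow$-free formulas (a two-sided indistinguishability), arranged by giving their cones the common top $d$, so that every negation evaluates at both $a$ and $c$ to the single condition $d\nVdash\varphi$ — a neat structural disposal of the $\neg$ case that does not even need the induction hypothesis (the paper handles $\neg$ via Remark~\ref{rem} instead), while $\wedge$ and $\vee$ are pointwise and $\mathfrak{p}\!\rightarrow\!\mathfrak{q}$ separates $a$ from $c$ because the atoms become true in a different order along the two cones. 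Your version is slightly less economical (four worlds and a biconditional invariant where three worlds and a one-way implication suffice), but it is more explicit about the final step — invoking soundness of GDL with respect to connected finite Kripke models to exclude definability up to GDL-equivalence, a step the paper leaves implicit — and it makes the design principle behind such countermodels transparent. Both arguments are valid Kripke-model-theoretic proofs of the theorem.
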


\begin{proof}

\noindent

\noindent
For  the Kripke model $\mathcal{K}=\langle K,\succcurlyeq,\Vdash\rangle$,  where
$K=\{a,b,c\}$,
$\succcurlyeq$ is the reflexive closure of $\{(a,b),(c,b)\}$, and
$\Vdash\;=\{(a,\mathfrak{p}),
(b,\mathfrak{p}),(b,\mathfrak{q})\}$, for $\mathfrak{p},\mathfrak{q}\in{\tt At}$,
\begin{diagram}
& & \hspace{3em}\bullet\; b[\![\mathfrak{p},\!\mathfrak{q}]\!] & & \\
  & \ruTo &&\luTo& \\
\hspace{3em}\bullet\;a[\![\mathfrak{p}]\!] & & & & \hspace{0.75em}\bullet\; c[\![]\!]
\end{diagram}
\noindent
we show that for all the formulas $\theta\in\mathcal{L}(\neg,\vee,\wedge,
\boldsymbol\top,
\mathfrak{p},\mathfrak{q})$, the following holds:

$(\ast)\qquad b,c\Vdash\theta\Longrightarrow a\Vdash\theta$.

\noindent
This completes the proof since  $b,c\Vdash\mathfrak{p}\!\rightarrow\!\mathfrak{q}$ but $a\nVdash\mathfrak{p}\!\rightarrow\!\mathfrak{q}$ (by $a\Vdash\mathfrak{p},a\nVdash\mathfrak{q}$);  thus we have $(\mathfrak{p}\!\rightarrow\!\mathfrak{q})\not\in
\mathcal{L}(\neg,\vee,\wedge,
\boldsymbol\top,
\mathfrak{p},\mathfrak{q})$. The proof of $(\ast)$ is by induction on $\theta$; the only non-trivial cases to consider are $\theta=\varphi\!\vee\!\psi$ and $\theta=\varphi\!\wedge\!\psi$. Suppose that $(\ast)$ holds for $\varphi$ and $\psi$; and that $b,c\Vdash\varphi\!\vee\!\psi$. Then we have either $c\Vdash\varphi$ or $c\Vdash\psi$; by the persistency, the former implies $b\Vdash\varphi$ and the latter $b\Vdash\psi$. So, in either case by the induction hypothesis
we have $a\Vdash\varphi\!\vee\!\psi$. The case of $\theta=\varphi\!\wedge\!\psi$ is even simpler.
\hfill \ding{113} \end{proof}

The following has been known for a long time; see e.g. \cite{dummet}.

\begin{theorem}[$\boldsymbol\vee$ {\em Is}  Definable From $\boldsymbol\wedge,\boldsymbol\rightarrow$ in GDL]\label{th:vee}

\noindent
In G\"odel-Dummet Logic, the disjunction connective $(\vee)$ {\em is} definable from some other propositional connectives.
\end{theorem}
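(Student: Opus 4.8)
The plan is to exhibit an explicit formula in $\mathcal{L}(\wedge,\rightarrow,\boldsymbol\top)$ that is equivalent, over all connected finite Kripke models, to $\mathfrak{p}\vee\mathfrak{q}$. The classical G\"odel--Dummett identity suggests the candidate
$$\delta(\mathfrak{p},\mathfrak{q}) \;=\; \bigl((\mathfrak{p}\!\rightarrow\!\mathfrak{q})\!\rightarrow\!\mathfrak{q}\bigr)\;\wedge\;\bigl((\mathfrak{q}\!\rightarrow\!\mathfrak{p})\!\rightarrow\!\mathfrak{p}\bigr),$$
and I would prove that for every element $k$ of every connected finite Kripke model $\mathcal{K}=\langle K,\succcurlyeq,\Vdash\rangle$ one has $k\Vdash\delta(\mathfrak{p},\mathfrak{q})$ iff $k\Vdash\mathfrak{p}\vee\mathfrak{q}$. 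Since every formula of GDL is validated exactly by the connected finite Kripke models (as recalled in the Introduction), establishing this semantic equivalence suffices; note that $\delta$ uses only $\wedge$ and $\rightarrow$, so $\vee$ is definable from $\{\wedge,\rightarrow\}$ as claimed.

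The easy direction is $k\Vdash\mathfrak{p}\vee\mathfrak{q}\Rightarrow k\Vdash\delta(\mathfrak{p},\mathfrak{q})$, and in fact this holds in every (not necessarily connected) Kripke model: if $k\Vdash\mathfrak{p}$, then for any $k'\succcurlyeq k$ with $k'\Vdash\mathfrak{q}\!\rightarrow\!\mathfrak{p}$ persistency gives $k'\Vdash\mathfrak{p}$, so $k\Vdash(\mathfrak{q}\!\rightarrow\!\mathfrak{p})\!\rightarrow\!\mathfrak{p}$; also trivially $k\Vdash(\mathfrak{p}\!\rightarrow\!\mathfrak{q})\!\rightarrow\!\mathfrak{q}$ since any $k'\succcurlyeq k$ with $k'\Vdash\mathfrak{p}\!\rightarrow\!\mathfrak{q}$ satisfies $k'\Vdash\mathfrak{p}$ by persistency hence $k'\Vdash\mathfrak{q}$. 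The case $k\Vdash\mathfrak{q}$ is symmetric.

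The converse is where connectivity is needed. Suppose $k\Vdash\delta(\mathfrak{p},\mathfrak{q})$ but $k\nVdash\mathfrak{p}$ and $k\nVdash\mathfrak{q}$; I want a contradiction. From $k\nVdash\mathfrak{p}$ and $k\Vdash(\mathfrak{q}\!\rightarrow\!\mathfrak{p})\!\rightarrow\!\mathfrak{p}$ one gets $k\nVdash\mathfrak{q}\!\rightarrow\!\mathfrak{p}$, so there is some $k_1\succcurlyeq k$ with $k_1\Vdash\mathfrak{q}$ and $k_1\nVdash\mathfrak{p}$; symmetrically from $k\nVdash\mathfrak{q}$ and $k\Vdash(\mathfrak{p}\!\rightarrow\!\mathfrak{q})\!\rightarrow\!\mathfrak{q}$ there is some $k_2\succcurlyeq k$ with $k_2\Vdash\mathfrak{p}$ and $k_2\nVdash\mathfrak{q}$. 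Now connectivity applied to $k_1,k_2\succcurlyeq k$ gives $k_1\succcurlyeq k_2$ or $k_2\succcurlyeq k_1$; in the first case persistency forces $k_1\Vdash\mathfrak{p}$, contradicting $k_1\nVdash\mathfrak{p}$, and in the second case $k_2\Vdash\mathfrak{q}$, contradicting $k_2\nVdash\mathfrak{q}$. Hence $k\Vdash\mathfrak{p}$ or $k\Vdash\mathfrak{q}$, i.e.\ $k\Vdash\mathfrak{p}\vee\mathfrak{q}$.

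I expect the only real subtlety to be making sure the equivalence is stated for the \emph{right} class of models, namely that GDL-validity coincides with validity in all connected finite Kripke models, so that the semantic equivalence of $\delta(\mathfrak{p},\mathfrak{q})$ and $\mathfrak{p}\vee\mathfrak{q}$ on that class actually yields $\vdash_{\mathrm{GDL}}\delta(\mathfrak{p},\mathfrak{q})\leftrightarrow(\mathfrak{p}\vee\mathfrak{q})$; this is precisely the characterization recalled before Definition~\ref{def:connected}. Everything else is a short induction-free argument using only persistency (Remark~\ref{rem}) and Definition~\ref{def:connected}, so there is no genuine obstacle once the candidate formula is written down.
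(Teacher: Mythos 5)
Your proposal is correct and follows essentially the same route as the paper: the same defining formula $[(\mathfrak{p}\!\rightarrow\!\mathfrak{q})\!\rightarrow\!\mathfrak{q}]\wedge[(\mathfrak{q}\!\rightarrow\!\mathfrak{p})\!\rightarrow\!\mathfrak{p}]$, the same easy IPL-valid direction, and the same contrapositive argument for the converse producing two incomparable-looking successors that connectivity plus persistency rules out. The only difference is that you spell out the easy direction in more detail than the paper does; no gap.
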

\begin{proof}

\noindent

\noindent
It is rather easy to see that
${\rm IPL}\Vdash (\mathfrak{p}\!\vee\!\mathfrak{q}) \longrightarrow [(\mathfrak{p}\!\rightarrow\!\mathfrak{q})
\!\rightarrow\!\mathfrak{q}]\!\wedge\!
[(\mathfrak{q}\!\rightarrow\!\mathfrak{p})
\!\rightarrow\!\mathfrak{p}]$. Now, we show that
${\rm GDL}\Vdash
[(\mathfrak{p}\!\rightarrow\!\mathfrak{q})
\!\rightarrow\!\mathfrak{q}]\!\wedge\!
[(\mathfrak{q}\!\rightarrow\!\mathfrak{p})
\!\rightarrow\!\mathfrak{p}]
\longrightarrow
 (\mathfrak{p}\!\vee\!\mathfrak{q})$ holds. Take an arbitrary {\em connected}  Kripke model $\mathcal{K}=\langle K,\succcurlyeq,\Vdash\rangle$, and suppose that for an arbitrary $a\in K$ we have $a\Vdash[(\mathfrak{p}\!\rightarrow\!\mathfrak{q})
\!\rightarrow\!\mathfrak{q}]\!\wedge\!
[(\mathfrak{q}\!\rightarrow\!\mathfrak{p})
\!\rightarrow\!\mathfrak{p}]$. We show that $a\Vdash\mathfrak{p}\!\vee\!\mathfrak{q}$. Assume not; then $a\nVdash\mathfrak{p},\!\mathfrak{q}$. Therefore, $a\nVdash(\mathfrak{p}\!\rightarrow\!\mathfrak{q})$ and $a\nVdash(\mathfrak{q}\!\rightarrow\!\mathfrak{p})$, by $a\Vdash[(\mathfrak{p}\!\rightarrow\!\mathfrak{q})
\!\rightarrow\!\mathfrak{q}]$ and $a\Vdash[(\mathfrak{q}\!\rightarrow\!\mathfrak{p})
\!\rightarrow\!\mathfrak{p}]$, respectively. So, there should exist some $b,c\in K$ with $b,c\!\succcurlyeq\!a$ such that $b\Vdash\mathfrak{p}$, $b\nVdash\mathfrak{q}$, $c\Vdash\mathfrak{q}$, and $c\nVdash\mathfrak{p}$.
\begin{diagram}
\hspace{3em}\bullet\; b[\![\mathfrak{p}]\!] & &  & &
\hspace{1.5em}\bullet\;
c[\![\mathfrak{q}]\!] \\
  & \luTo &&  \ruTo& \\
 & & \hspace{1em}\bullet\;a[\![]\!] & & \end{diagram}
\noindent
By the connectivity of $\succcurlyeq$, we should have either $b\!\succcurlyeq\!c$ or $c\!\succcurlyeq\!b$. Both cases lead to a contradiction, by the persistency condition.  So, the following equivalence
$$(\mathfrak{p}\!\vee\!\mathfrak{q})\;\equiv\;
[(\mathfrak{p}\!\rightarrow\!\mathfrak{q})
\!\rightarrow\!\mathfrak{q}]\!\wedge\!
[(\mathfrak{q}\!\rightarrow\!\mathfrak{p})
\!\rightarrow\!\mathfrak{p}]$$
holds in GDL.
\hfill \ding{113} \end{proof}

The fact of the matter is that $(\mathfrak{p}\!\vee\!\mathfrak{q})\;\equiv\;
[(\mathfrak{p}\!\rightarrow\!\mathfrak{q})
\!\rightarrow\!\mathfrak{q}]\!\wedge\!
[(\mathfrak{q}\!\rightarrow\!\mathfrak{p})
\!\rightarrow\!\mathfrak{p}]$  is the only non-trivial equivalence relation between the propositional connectives in GDL. The first half of the following theorem was proved in \cite{ss-farsi}.

\begin{theorem}[In GDL $\boldsymbol\vee$ Is Not Definable Without Both $\boldsymbol\wedge,\boldsymbol\rightarrow$]
\label{th:notvee}

\noindent
In G\"odel-Dummet Logic, the disjunction connective $(\vee)$ is not definable from the other propositional connectives, unless both the conjunction and the implication connectives are present. In the other words, $\vee$ is  definable {\em neither} from the set  $\{\neg,\rightarrow,\boldsymbol\top\}$ {\em nor} from the set $\{\neg,\wedge,\boldsymbol\top\}$.
\end{theorem}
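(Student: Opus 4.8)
The plan is to prove the two non-definability claims separately, each by exhibiting a suitable connected finite Kripke model and establishing, by induction on formula complexity, that every formula over the restricted language behaves "the same way" at two chosen points while $\mathfrak{p}\vee\mathfrak{q}$ distinguishes them. This mirrors exactly the technique of Theorems~\ref{th:wedge} and~\ref{th:impl}, so I would keep the presentation parallel.

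For the part "$\vee$ is not definable from $\{\neg,\rightarrow,\boldsymbol\top\}$", I would look for a connected model $\mathcal{K}=\langle K,\succcurlyeq,\Vdash\rangle$ with a point $a$ at which $a\nVdash\mathfrak{p}\vee\mathfrak{q}$, together with an invariant of the shape: for all $\theta\in\mathcal{L}(\neg,\rightarrow,\boldsymbol\top,\mathfrak{p},\mathfrak{q})$, some closure condition holds that $\mathfrak{p}\vee\mathfrak{q}$ violates. The natural candidate is a three-point model shaped like the ones already used — $K=\{a,b,c\}$ with $b,c\succcurlyeq a$ and $b,c$ incomparable (this frame \emph{is} connected in the sense of Definition~\ref{def:connected}, since the only common lower bound situations are handled), $b\Vdash\mathfrak{p}$ only, $c\Vdash\mathfrak{q}$ only. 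Then $a\nVdash\mathfrak{p}\vee\mathfrak{q}$. The claim to prove by induction would be: $(\ast)$ if $b\Vdash\theta$ and $c\Vdash\theta$ then $a\Vdash\theta$, for all $\theta$ \emph{not using} $\vee$. The cases $\boldsymbol\top,\mathfrak{p},\mathfrak{q}$ are immediate ($\mathfrak{p}$ fails at $c$, $\mathfrak{q}$ fails at $b$, so the hypothesis is vacuous); the $\wedge$ case is trivial from the IH; for $\neg\varphi$, if $b\Vdash\neg\varphi$ and $c\Vdash\neg\varphi$ then by Remark~\ref{rem} $b\nVdash\varphi$ and $c\nVdash\varphi$, and moreover persistency forces $a\nVdash\varphi$ (anything at $a$ persists up), so $a\Vdash\neg\varphi$; for $\varphi\rightarrow\psi$, if $b,c\Vdash\varphi\rightarrow\psi$ and $a\Vdash\varphi$ then $a$ persists to $b$ and $c$, giving $b\Vdash\psi,c\Vdash\psi$, hence $a\Vdash\psi$ by IH — so $a\Vdash\varphi\rightarrow\psi$. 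Since $\mathfrak{p}\vee\mathfrak{q}$ violates $(\ast)$, it is not in $\mathcal{L}(\neg,\rightarrow,\boldsymbol\top,\mathfrak{p},\mathfrak{q})$.

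For the part "$\vee$ is not definable from $\{\neg,\wedge,\boldsymbol\top\}$", the situation is easier: in the language without $\rightarrow$, negation is the only connective that "looks upward", and $\neg\varphi$ always takes a value in $\{\mathbf{0},\mathbf{1}\}$-style behaviour at a point depending only on whether $\varphi$ holds somewhere above. I would pick a single-chain connected model — say $K=\{a,b\}$ with $b\succcurlyeq a$, $b\Vdash\mathfrak{p}$ and $b\Vdash\mathfrak{q}$ but $a\nVdash\mathfrak{p},\mathfrak{q}$ — or, to separate the atoms, a model where $a\Vdash$ nothing, and argue that every $\theta\in\mathcal{L}(\neg,\wedge,\boldsymbol\top,\mathfrak{p},\mathfrak{q})$ satisfies: $a\Vdash\theta$ iff ($\theta$ is "positive-atom-free enough"), more precisely an induction showing $a\Vdash\theta \iff b\Vdash\theta$ \emph{and} $\theta$ contains no atom — which fails for $\mathfrak{p}\vee\mathfrak{q}$ at a model where $b\Vdash\mathfrak{p}$, $b\nVdash\mathfrak{q}$, forcing $b\Vdash\mathfrak{p}\vee\mathfrak{q}$ but $a\nVdash\mathfrak{p}\vee\mathfrak{q}$ while the $\wedge$-conjunction of the atoms would already fail at $b$. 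Concretely, reuse the model of Theorem~\ref{th:impl} ($b\Vdash\mathfrak{p},\mathfrak{q}$; $a\Vdash\mathfrak{p}$; $c\Vdash$ nothing): one checks $(\ast)$: if $b\Vdash\theta$ and $c\Vdash\theta$ then $a\Vdash\theta$, now for $\theta$ over $\{\neg,\wedge,\boldsymbol\top,\mathfrak{p},\mathfrak{q}\}$, which is literally the induction already done in Theorem~\ref{th:impl} restricted to a smaller language, and $b,c\Vdash\mathfrak{p}\vee\mathfrak{q}$ is \emph{false} there ($c\nVdash\mathfrak{p}\vee\mathfrak{q}$) — so that model does not work directly. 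Hence I would instead take $K=\{a,b,c\}$, $b,c\succcurlyeq a$, $b,c$ incomparable, with $b\Vdash\mathfrak{p},\mathfrak{q}$ and $c\Vdash\mathfrak{p},\mathfrak{q}$ but $a\Vdash$ nothing, which is not connected; so the cleanest route is the two-point chain for the $\{\neg,\wedge\}$ case with a separate bookkeeping of which atom is witnessed.

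The main obstacle I anticipate is the connectivity constraint: GDL-models must be connected (Definition~\ref{def:connected}), which rules out the "two incomparable successors" frame used freely for plain IPL, so for \emph{each} of the two reducts I must find a \emph{connected} countermodel, and then re-examine whether $\mathfrak{p}\vee\mathfrak{q}$ really separates the chosen points there. For $\{\neg,\rightarrow,\boldsymbol\top\}$ I expect a connected model shaped as a "fork seen from below" still works because Definition~\ref{def:connected} only constrains pairs of points above a \emph{common} lower point, and in the three-point fork the only such configuration is $b,c\succcurlyeq a$ with $b,c$ the \emph{maximal} nodes — wait, that \emph{does} require $b\succcurlyeq c$ or $c\succcurlyeq b$, so the fork is \emph{not} connected; hence for this case I must use a connected (linear) model, e.g. a chain $a\prec b$ with a third point, or argue via a different invariant. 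The correct clean approach for both halves is therefore: take linear (hence automatically connected) models and choose the forcing of $\mathfrak{p},\mathfrak{q}$ so that the disjunction is forced strictly higher than either disjunct-free formula could be — I would invoke the known Kripke-model characterisation of GDL (connected finite frames) to justify working with chains, and push the routine induction through there; the delicate point is ensuring the invariant chosen for the $\rightarrow$-free reduct is genuinely preserved under $\neg$ over a chain, which it is, since on a chain $\neg\varphi$ is forced at a point iff $\varphi$ is forced at no point of the (linearly ordered) upset, a condition insensitive to the lower "atom-free" points.
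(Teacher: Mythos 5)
There is a genuine gap: you never arrive at a countermodel that actually works, and the route you finally settle on (``take linear, hence automatically connected, models'') cannot work even in principle. On any single chain the sets of points forcing $\mathfrak{p}$ and forcing $\mathfrak{q}$ are two upsets of a linear order, hence one contains the other; consequently $\mathfrak{p}\!\vee\!\mathfrak{q}$ is forced at exactly the same points as one of the two atoms, which already lies in both restricted languages. So no single chain --- in particular neither your two-point chain $a\!\prec\!b$ with $b\Vdash\mathfrak{p},\mathfrak{q}$ nor any ``chain with a third point'' --- can separate $\mathfrak{p}\!\vee\!\mathfrak{q}$ from every $\theta\in\mathcal{L}(\neg,\rightarrow,\boldsymbol\top,\mathfrak{p},\mathfrak{q})$ or from every $\theta\in\mathcal{L}(\neg,\wedge,\boldsymbol\top,\mathfrak{p},\mathfrak{q})$. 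Your single-model invariant induction needs one fixed model on which the separation happens, and you correctly discard the three-point fork (not connected) and the model of Theorem~\ref{th:impl} (where $c\nVdash\mathfrak{p}\!\vee\!\mathfrak{q}$), but you are then left with nothing.

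The missing idea is that connectivity in the sense of Definition~\ref{def:connected} only constrains points lying above a \emph{common} point, so a disjoint union of chains is connected even though it is not linear. The paper takes $K=\{a,b,c,d\}$ with $\succcurlyeq$ the reflexive closure of $\{(a,b),(c,d)\}$ (two separate two-element chains) and $\Vdash=\{(b,\mathfrak{p}),(d,\mathfrak{q})\}$. Then $b,d\Vdash\mathfrak{p}\!\vee\!\mathfrak{q}$ while $a,c\nVdash\mathfrak{p}\!\vee\!\mathfrak{q}$, and one proves by induction that every $\theta$ over $\{\neg,\rightarrow,\boldsymbol\top,\mathfrak{p},\mathfrak{q}\}$ satisfies $b,d\Vdash\theta\Rightarrow a\Vdash\theta$ or $c\Vdash\theta$ (the atomic cases are vacuous because neither atom is forced at both $b$ and $d$), and every $\theta$ over $\{\neg,\wedge,\boldsymbol\top,\mathfrak{p},\mathfrak{q}\}$ satisfies the stronger $b,d\Vdash\theta\Rightarrow a,c\Vdash\theta$. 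Your inductive steps for $\neg$, $\rightarrow$ and $\wedge$ are essentially the right ones and transfer to this model; what your proposal lacks is the model itself, and with it the observation that ``connected'' does not mean ``linear''.
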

\begin{proof}

\noindent

\noindent
Take the Kripke model $\mathcal{K}=\langle K,\succcurlyeq,\Vdash\rangle$ with
$K=\{a,b,c,d\}$,
$\succcurlyeq\,=$ the reflexive  closure of $\{(a,b),(c,d)\}$, and
$\Vdash\;=\{(b,\mathfrak{p}),(d,\mathfrak{q})\}$, for $\mathfrak{p},\mathfrak{q}\in{\tt At}$.
\begin{diagram}
\hspace{2em}\bullet\; b[\![\mathfrak{p}]\!] &  &  \hspace{2em}\bullet\;d[\![\mathfrak{q}]\!] \\
\uTo & & \uTo \\
\hspace{1.55em}\bullet\; a[\![]\!] &  & \hspace{1.55em}\bullet\; c[\![]\!]
\end{diagram}
\noindent
We show that for all $\theta\in\mathcal{L}(\neg,\rightarrow,
\boldsymbol\top,
\mathfrak{p},\mathfrak{q})$ we have

$(\ast)\qquad b,d\Vdash\theta\Longrightarrow a\Vdash\theta\textrm{ or }c\Vdash\theta$.

\noindent
Since  $b,d\Vdash\mathfrak{p}\!\vee\!\mathfrak{q}$ but $a,c\nVdash\mathfrak{p}\!\vee\!\mathfrak{q}$, then it follows that $\mathfrak{p}\!\vee\!\mathfrak{q}\not\in
\mathcal{L}(\neg,\rightarrow,
\boldsymbol\top,
\mathfrak{p},\mathfrak{q})$.

\noindent
Now, $(\ast)$ can be proved by induction on $\theta$; the only non-trivial case is $\theta=\varphi\!\rightarrow\!\psi$. If $(\ast)$ holds for $\varphi$ and $\psi$, then if $b,d\Vdash\varphi\!\rightarrow\!\psi$ but $a\nVdash\varphi\!\rightarrow\!\psi$ and $c\nVdash\varphi\!\rightarrow\!\psi$, then we should have $a\Vdash\varphi$ and $a\nVdash\psi$, and also $c\Vdash\varphi$ and $c\nVdash\psi$. So, by persistency, $b\Vdash\varphi$ and $d\Vdash\varphi$; thus $b\Vdash\psi$ and $d\Vdash\psi$. So, by the induction hypothesis $(\ast \textrm{ for } \theta\!=\!\psi)$  we should have either $a\Vdash\psi$ or $c\Vdash\psi$; a contradiction.

\noindent
Now, for proving $\mathfrak{p}\!\vee\!\mathfrak{q}\not\in
\mathcal{L}(\neg,\wedge,
\boldsymbol\top,
\mathfrak{p},\mathfrak{q})$, we show that
for all the formulas $\theta$ in $\mathcal{L}(\neg,\wedge,
\boldsymbol\top,
\mathfrak{p},\mathfrak{q})$ we have

$(\ddag)\qquad
b,d\Vdash\theta\Longrightarrow a,c\Vdash\theta$.

\noindent
Trivially, $(\ddag)$ holds for $\theta=\boldsymbol\top,
\mathfrak{p},\mathfrak{q}$; so by Remark~\ref{rem} it only suffices to show that  $(\ddag)$ holds for $\theta=\varphi\!\wedge\!\psi$, when it holds for $\varphi$ and $\psi$. Now, if $b,d\Vdash\varphi\!\wedge\!\psi$ then $b,d\Vdash\varphi$ and $b,d\Vdash\psi$; so
the induction hypothesis $(\ddag \textrm{ for } \theta\!=\!\varphi,\psi)$ implies that $a,c\Vdash\varphi$ and $a,c\Vdash\psi$, therefore $a,c\Vdash\varphi\!\wedge\!\psi$.
\hfill \ding{113} \end{proof}

We end the paper with a Kripke model theoretic proof of a known fact.

\begin{proposition}[No Connective Is Definable From the Others in IPL]\label{th:notvee}

\noindent

\noindent
In IPL, no propositional connective is definable from the others.
\end{proposition}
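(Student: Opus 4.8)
\emph{Proof proposal.}
The plan is to split the statement into the four non-definability claims, one per connective in $\{\neg,\wedge,\vee,\rightarrow\}$, and to settle each by exhibiting a finite Kripke model that separates the canonical formula built with that connective from every formula avoiding it. Concretely, a connective $c$ is definable from the others in IPL exactly when the associated basic formula ($\neg\mathfrak{p}$, $\mathfrak{p}\!\wedge\!\mathfrak{q}$, $\mathfrak{p}\!\vee\!\mathfrak{q}$, or $\mathfrak{p}\!\rightarrow\!\mathfrak{q}$) is IPL-equivalent to some $\theta\in\mathcal{L}(\mathscr{L}\!\setminus\!\{c\},{\tt At})$; by the soundness and completeness of IPL with respect to finite Kripke models this forces the basic formula and such a $\theta$ to be satisfied at the same nodes of every finite Kripke model, so it suffices to produce one finite model and one node where this fails for every $\theta$ in the relevant sublanguage. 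Two of the four cases are for free: an IPL-equivalence is in particular a GDL-equivalence, so ``not definable in GDL'' implies ``not definable in IPL'', and Theorems~\ref{th:wedge} and~\ref{th:impl} therefore already give the non-definability of $\wedge$ and of $\rightarrow$ in IPL.

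For $\neg$ I would use the one-point model $\mathcal{K}=\langle\{w\},\{(w,w)\},\Vdash\rangle$ with $w\Vdash\mathfrak{r}$ for \emph{every} atom $\mathfrak{r}$. A routine induction shows that every $\neg$-free formula (i.e.\ every $\theta\in\mathcal{L}(\{\wedge,\vee,\rightarrow,\boldsymbol\top\},{\tt At})$) is satisfied at $w$: the cases $\boldsymbol\top$ and atoms are immediate, $\wedge$ and $\vee$ are trivial, and for $\varphi\!\rightarrow\!\psi$ the only successor of $w$ is $w$ itself, at which $\psi$ already holds by the induction hypothesis. Since $w\Vdash\mathfrak{p}$ gives $w\nVdash\neg\mathfrak{p}$, the formula $\neg\mathfrak{p}$ is not IPL-equivalent to any $\neg$-free formula.

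The remaining, and main, case is $\vee$, and here the GDL arguments genuinely break down: by Theorem~\ref{th:vee} the connective $\vee$ \emph{is} definable from $\{\wedge,\rightarrow\}$ in GDL, so the separating model must not be connected. I would take the three-point fork $\mathcal{K}=\langle K,\succcurlyeq,\Vdash\rangle$ with $K=\{a,b,c\}$, where $\succcurlyeq$ is the reflexive closure of $\{(b,a),(c,a)\}$ (so $a$ is the root and $b,c$ are incomparable maximal nodes), and $\Vdash\,=\{(b,\mathfrak{p}),(c,\mathfrak{q})\}$; persistency is immediate since $a$ forces no atom, so this is a legitimate finite Kripke model even though it is not connected, which is precisely why it is available for IPL but not for GDL. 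Now $b,c\Vdash\mathfrak{p}\!\vee\!\mathfrak{q}$ while $a\nVdash\mathfrak{p}\!\vee\!\mathfrak{q}$, and it remains to prove, by induction on $\theta\in\mathcal{L}(\{\neg,\wedge,\rightarrow,\boldsymbol\top\},{\tt At})$, the property $(\ast)$: if $b\Vdash\theta$ and $c\Vdash\theta$ then $a\Vdash\theta$. The atomic and $\boldsymbol\top$ cases are trivial (for an atom the hypothesis is simply false), $\wedge$ is immediate, and $\neg\varphi$ uses the partial converse for negated formulas of Remark~\ref{rem}: $b\Vdash\neg\varphi$ gives $a\nVdash\varphi$, while $b\Vdash\neg\varphi$ and $c\Vdash\neg\varphi$ give $b\nVdash\varphi$ and $c\nVdash\varphi$, whence $a\Vdash\neg\varphi$. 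The delicate step is $\theta=\varphi\!\rightarrow\!\psi$: assuming $b,c\Vdash\varphi\!\rightarrow\!\psi$ and $a\Vdash\varphi$, persistency yields $b,c\Vdash\varphi$, hence $b,c\Vdash\psi$, and then the induction hypothesis $(\ast)$ for $\psi$ gives $a\Vdash\psi$; together with the trivial checks at $b$ and $c$ this gives $a\Vdash\varphi\!\rightarrow\!\psi$. Unlike in the preceding theorem, no two-component frame is needed, because IPL admits arbitrary finite frames and so a single root handles $\neg$, $\wedge$ and $\rightarrow$ at once.

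I expect the $\vee$ case to absorb essentially all the work: the point is to recognise that the GDL models cannot be reused (by Theorem~\ref{th:vee}) and to replace them by a non-connected frame, and then the $\rightarrow$ step of the induction for $(\ast)$ must combine persistency with the induction hypothesis exactly as in the proofs of Theorems~\ref{th:wedge} and~\ref{th:impl}. The other three cases are short by comparison.
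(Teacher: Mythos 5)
Your proposal is correct and follows essentially the same route as the paper: $\wedge$ and $\rightarrow$ are inherited from the GDL theorems, and $\vee$ is handled by the same three-point fork (root $a$, incomparable $b\Vdash\mathfrak{p}$ and $c\Vdash\mathfrak{q}$) with the same induction on $(\ast)\colon b,c\Vdash\theta\Rightarrow a\Vdash\theta$, the $\rightarrow$ step combining persistency with the induction hypothesis exactly as in the paper. The only cosmetic difference is the $\neg$ case, where you use the one-point all-atoms-true model instead of the paper's appeal to positivity of the remaining connectives — two phrasings of the same observation.
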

\begin{proof}

\noindent

\noindent
By Theorems~\ref{th:wedge} and ~\ref{th:impl}, $\wedge$ and $\rightarrow$ are not definable from the other connectives even in GDL. The statement   $\neg\mathfrak{p}\not\in
\mathcal{L}(\wedge,\vee,\rightarrow,\boldsymbol\top,
\mathfrak{p})$ can be easily verified by noting that all the operations on the righthand side are positive. So, all it remains is to show that we have $\mathfrak{p}\!\vee\!\mathfrak{q}\not\in
\mathcal{L}(\neg,\wedge,\rightarrow,
\boldsymbol\top,
\mathfrak{p},\mathfrak{q})$ in IPL (cf. Theorem~\ref{th:vee}). Consider the Kripke model $\mathcal{K}=\langle K,\succcurlyeq,\Vdash\rangle$ with
$K=\{a,b,c\}$,
$\succcurlyeq\,=$ the reflexive  closure of $\{(a,b),(a,c)\}$, and
$\Vdash\;=\{(b,\mathfrak{p}),(c,\mathfrak{q})\}$, for $\mathfrak{p},\mathfrak{q}\in{\tt At}$.
\begin{diagram}
\hspace{3em}\bullet\; b[\![\mathfrak{p}]\!] & &  & &
\hspace{1.5em}\bullet\;
c[\![\mathfrak{q}]\!] \\
  & \luTo &&  \ruTo& \\
 & & \hspace{1em}\bullet\;a[\![]\!] & & \end{diagram}
We show that for all formulas  $\theta\in\mathcal{L}(\neg,\wedge,\rightarrow,
\boldsymbol\top,
\mathfrak{p},\mathfrak{q})$ we have:

$(\ast)\qquad
b,c\Vdash\theta\Longrightarrow a\Vdash\theta$.

\noindent
This will prove the theorem, since  $b,c\Vdash\mathfrak{p}\!\vee\!\mathfrak{q}$ but $a\nVdash\mathfrak{p}\!\vee\!\mathfrak{q}$, and so  $\mathfrak{p}\!\vee\!\mathfrak{q}$ is not  in $\mathcal{L}(\neg,\wedge,\rightarrow,
\boldsymbol\top,
\mathfrak{p},\mathfrak{q})$ in IPL. Indeed, $(\ast)$ can be proved by induction on $\theta$; for which we consider the case of $\theta=\varphi\!\rightarrow\psi$ only. So, suppose that $(\ast)$ holds for $\varphi$ and $\psi$ and that $b,c\Vdash\varphi\!\rightarrow\!\psi$ but $a\nVdash\varphi\!\rightarrow\!\psi$. Then we should have $a\Vdash\varphi$ and $a\nVdash\psi$; but by persistency we should have that $b,c\Vdash\varphi$, and so $b,c\Vdash\psi$ holds. Now, the induction hypothesis $(\ast \textrm{ for } \theta\!=\!\psi)$ implies that $a\Vdash\psi$, a contradiction.
\hfill \ding{113} \end{proof}

%
%

\begin{thebibliography}{99}

\bibitem{brouwer}
{\sc  Brouwer, Luitzen Egbertus Jan};  {\em Intuitionism and Formalism}, {\bf Bulletin of the American Mathematical Society} 20:2 (1913) 81--96.
{\sc doi}:~\href{https://doi.org/10.1090/S0273-0979-99-00802-2}{10.1090/S0273-0979-99-00802-2}




\medskip


\bibitem{dummet}
{\sc Dummet, Michael}; {\em A Propositional Calculus with Denumerable Matrix},  {\bf The Journal of Symbolic Logic } 24:2 (1959) 97--106.
{\sc doi}:~\href{https://doi.org/10.2307/2964753}{10.2307/2964753}


\medskip



\bibitem{godel}
{\sc G\"{o}del, Kurt}; {\em Zum Intuitionistischen Aussagenkalk\"{u}l}, {\bf  Anzeiger Akademie der Wissenschaften Wien} 69 (1932) 65--66.
Translated as ``{\em On the Intuitionistic Propositional Calculus}'', in: {\bf Kurt C\"odel Collected Works (Volume I) Publications 1929-1936} ({\sc isbn}:~\href{https://isbnsearch.org/isbn/9780195147209}{9780195147209}) eds.: S.~Feferman, et al. (Oxford University Press 1986), pp.~222--225.


\medskip



\bibitem{heyting}
{\sc  Heyting, Arend}; {\em Die Formalen Regeln der Intuitionistischen Logik}, {\bf Sitzungsberichte der Preussischen Akademie von Wissenschaften,  Physikalisch Mathematische Klasse} (1930) 42--56. {\em Die Formalen Regeln der Intuitionistischen Mathematik}: {\em I}, {\bf ibid.} 57--71. {\em II},  {\bf ibid.} 158--169.

\medskip



\bibitem{jaskowski}
{\sc  Ja\'skowski, Stanis{\l}aw}; {\em Recherches sur le Système de la Logique Intuitioniste},  {\bf Actes du Congrès International de Philosophie Scientifique, VI.  Philosophie des Mathématiques, Actualités Scientifiques et Industrielles 393}, Hermann \& C$^\text{ie}$ (Parsi, 1936) pp.~58--61.


\medskip



\bibitem{kripke}
{\sc  Kripke, Saul A.}; {\em  A Completeness Theorem in Modal Logic}, {\bf The Journal of Symbolic Logic} 24:1 (1959) 1--14. {\sc doi}:~\href{https://doi.org/10.2307/2964568}{10.2307/2964568}


\medskip



\bibitem{safari}
{\sc Safari, Parvin}; \href{http://saeedsalehi.ir/pdf/PhD%20Thesis%20--%20Parvin%20Safari.pdf}
{\bf  Investigating Kripke Semantics for Fuzzy Logics}, Ph.D. Dissertation, under the supervision of {\sc Saeed Salehi},  University of Tabriz (2017).



\medskip



\bibitem{ss-soco}
{\sc Safari, Parvin \&  Salehi, Saeed}; {\em Kripke Semantics for Fuzzy Logics}, {\bf Soft Computing} 22:3 (2018) 839--844. {\sc doi}:~\href{https://doi.org/10.1007/s00500-016-2387-4}{10.1007/s00500-016-2387-4}


\medskip



\bibitem{ss-farsi}
{\sc Safari, Parvin \&  Salehi, Saeed}; {\em Truth Values and Connectives in Some Non-Classical Logics}, {\bf Journal of New Researches in Mathematics}, 5:19 (2019) 31--36  (in Farsi).  Available on the net at:
\texttt{\scriptsize 
\href{http://jnrm.srbiau.ac.ir/article_14450.html}
{http://jnrm.srbiau.ac.ir/article\_14450.html}
}



\medskip



\bibitem{svejd}
{\sc  Švejdar, Vítězslav \& Bendov\'a, Kamila}; {\em On Inter-Expressibility of Logical Connectives in Gödel Fuzzy Logic}, {\bf Soft Computing} 4:2 (2000) 103--105. {\sc doi}:~\href{https://doi.org/10.1007/s005000000036}{10.1007/s005000000036}


\end{thebibliography}
%

\end{document}